\documentclass[a4paper,12pt,oneside,reqno]{amsart}
\usepackage{amssymb,amsfonts,amsmath,amsthm,amscd, bbm}
\usepackage{graphicx, color}
\setlength{\textwidth}{16cm}
\usepackage{mathrsfs}
\addtolength{\oddsidemargin}{-1.5cm}
\addtolength{\evensidemargin}{-1cm}
\hoffset2.5pt

\thispagestyle{empty}
\numberwithin{equation}{section}

\newcommand{\beq}{\begin{equation}}
\newcommand{\eeq}{\end{equation}}
\newcommand{\bea}{\begin{aligned}}
\newcommand{\eea}{\end{aligned}}
\newcommand{\bdm}{\begin{displaymath}}
\newcommand{\edm}{\end{displaymath}}
\newcommand{\barr}{\begin{array}}
\newcommand{\earr}{\end{array}}
\newcommand{\ben}{\begin{enumerate}}
\newcommand{\een}{\end{enumerate}}
\newcommand{\bde}{\begin{description}}
\newcommand{\ede}{\end{description}}

\newtheorem{teor}{Theorem}
\newtheorem{prop}[teor]{Proposition}
\newtheorem{lem}[teor]{Lemma}
\newtheorem{cor}[teor]{Corollary}
\newtheorem{fact}{Fact}

\newtheorem{rem}[teor]{Remark}
\newcommand{\R}{\mathbb{R}}

\newcommand{\N}{\mathbb{N}}

\newcommand{\PP}{\mathbb{P}}
\newcommand{\E}{{\mathbb{E}}}

\newcommand{\defi}{\equiv} 

\newcommand{\de}{\delta}

\newcommand{\e}{\epsilon}
\newcommand{\la}{\lambda}
\newcommand{\w}{\omega}

\newcommand{\vare}{\varepsilon}

\newcommand{\F}{\mathcal{F}}
\newcommand{\1}{\mathbbm{1}}

\DeclareMathOperator{\Poi}{Poi}
\newcommand\blfootnote[1]{%
  \begingroup
  \renewcommand\thefootnote{}\footnote{#1}%
  \addtocounter{footnote}{-1}%
  \endgroup
}
\begin{document}

\title[FPP in the hypercube]
{Oriented first passage percolation in the mean field limit, 2. The extremal process.}

\author[N. Kistler]{Nicola Kistler}
\address{Nicola Kistler \\ J.W. Goethe-Universit\"at Frankfurt, Germany.}
\email{kistler@math.uni-frankfurt.de}

\author[A. Schertzer]{Adrien Schertzer}
\address{adrien schertzer \\ J.W. Goethe-Universit\"at Frankfurt, Germany.}
\email{schertzer@math.uni-frankfurt.de}

\author[M. A. Schmidt]{Marius A. Schmidt}
\address{Marius A. Schmidt \\ J.W. Goethe-Universit\"at Frankfurt, Germany.}
\email{mschmidt@math.uni-frankfurt.de}

 \date{\today}

\begin{abstract}
This is the second, and last paper in which we address the behavior of oriented first passage percolation on the hypercube in the limit of large dimensions. We prove here that the extremal process converges to a Cox process with exponential intensity. This entails, in particular, that the first passage time converges weakly to a random shift of the Gumbel distribution. The random shift, which has an explicit, universal distribution related to modified Bessel functions of the second kind, is the sole manifestation of correlations ensuing from the geometry of Euclidean space in infinite dimensions. The proof combines the multiscale refinement of the second moment method with a conditional version of the Chen-Stein bounds, and a contraction principle.
\end{abstract}
\blfootnote{\textup{2000} \textit{Mathematics Subject Classification}: \textup{60J80, 60G70, 82B44}} 

\maketitle


\section{Introduction and main results}
The model we consider is constructed as follows. We first embed the $n$-dimensional hypercube in $\R^n$: for $e_1,..,e_n$ the standard basis, we identify the hypercube as the graph $G_n \defi (V_n, E_n)$, where $V_n= \{0,1\}^n$ and $E_n \defi \{(v,v+e_j): v,v+e_j \in V, j\leq n\}$.  The set of shortest (directed) paths connecting diametrically opposite vertices, say $\boldsymbol{0}\defi (0,..,0)$ and $\boldsymbol{1}\defi (1,..,1)$, is   given by
\beq
\Sigma_n \defi \{ \pi\in V_{n+1}: \pi_1 = \boldsymbol{0}, \pi_{n+1} = \boldsymbol{1}, (\pi_i,\pi_{i+1})\in E_n, \forall i\leq n\}.
\eeq
A graphical rendition is given in Figure \ref{cube_paths} below.
\begin{figure}[h!]
\includegraphics[scale=0.33]{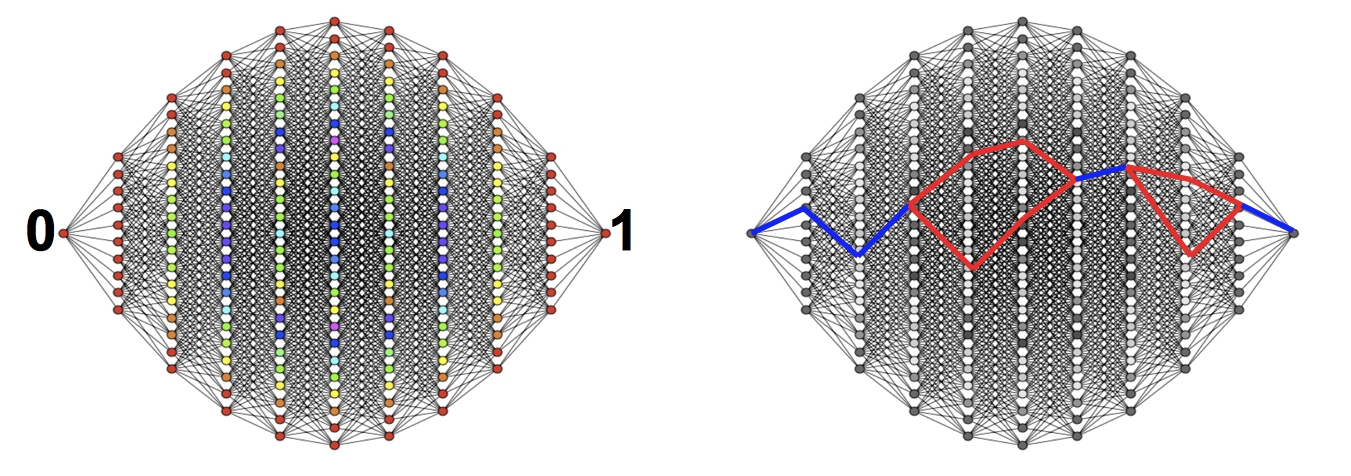}
\caption{The 10-dimensional hypercube (left), and two oriented connecting paths (right): blue edges are common to both paths, whereas paths do not overlap on red edges.}\label{cube_paths}
\end{figure}

Let now $(\xi_{e})_{e\in E}$ be a family of independent standard exponentials, i.e. exponentially distributed random variables with parameter 1, and assign to  each oriented path $\pi \in \Sigma_n$ its {\it weight}
$$X_\pi\defi\sum_{k\leq n} \xi_{[\pi]_k},$$
where $[\pi]_i = (\pi_i,\pi_{i+1})$ is the $i$-th edge of the path. 

A key question in first passage percolation, FPP for short, concerns the so-called  {\it first passage time},
\beq \label{one}
m_n \defi \min_{\pi \in \Sigma_n}X_\pi \,,
\eeq
namely the smallest weight of connecting paths. The limiting value of $m_n$ {\it to leading order} has been settled by Fill and Pemantle \cite{Fill_Pemantle}, who proved that
\beq \label{lim_leading}
\lim_{n\to \infty} m_n = 1,
\eeq
almost surely. 

The "law of large numbers" \eqref{lim_leading} naturally raises questions on fluctuations and weak limits, and calls for a description of the paths with minimal weight. As a first step towards this goal we presented in \cite{kss} an alternative, "modern" approach to \eqref{lim_leading} much inspired by the recent advances in the study of Derrida's random energy models (see \cite{kistler} and references therein) and which relies on the hierarchical approximation to the FPP.
In this companion paper we bring the approach to completion by establishing the full limiting picture, i.e. identifying the weak limit of the {\it extremal process}
\[
\Xi_n \defi \sum\limits_{\pi\in \Sigma_n} \delta_{n(X_\pi-1)}\,.
\]

\begin{teor}[Extremal process] \label{pf} Let $\Xi$ be a Cox process with intensity $ Z e^{x-1} dx$, where $Z$ is distributed like the product of two independent standard exponentials. Then
\beq \label{pf_lim}
\lim_{n\to \infty} \Xi_n  = \Xi,
\eeq
weakly. In particular, it follows for the first passage time $m_n$ that
\beq
\lim_{n\to \infty} \PP(   n(m_n-1) \leq t) = \int\limits_{0}^{\infty} \frac{x}{e^{1-t}+x}e^{-x} dx\,.
\eeq
\end{teor}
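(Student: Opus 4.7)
The plan is to establish the Cox limit via a conditional Poisson (Chen--Stein) approximation, with the mixing variable $Z$ identified from the endpoint edges. Define
\[
U_n \defi n \min_{j\leq n}\xi_{(\boldsymbol{0},e_j)},\qquad V_n \defi n \min_{j\leq n}\xi_{(\boldsymbol{1}-e_j,\boldsymbol{1})}.
\]
Each of $U_n,V_n$ is \emph{exactly} standard exponential for every $n$ (the minimum of $n$ i.i.d.\ $\mathrm{Exp}(1)$'s is $\mathrm{Exp}(n)$, so $n$ times it is $\mathrm{Exp}(1)$), and they are independent. Every $\pi\in\Sigma_n$ must use one of the $n$ edges at $\boldsymbol{0}$ and one of the $n$ edges at $\boldsymbol{1}$, so only paths with near-minimal boundary edges are competitive for $m_n = 1+O(1/n)$; this identifies $Z=UV$ with $U,V$ independent $\mathrm{Exp}(1)$, and $Z_n\defi U_nV_n$ as the natural finite-$n$ proxy.

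A first-moment computation fixes the base intensity. Since $X_\pi\sim\Gamma(n,1)$, the Gamma density at $x=1+t/n$ is asymptotic to $e^{t-1}/(n-1)!$ (using $(1+t/n)^{n-1}\to e^t$), whence after change of variables and $n!/(n-1)!=n$,
\[
n!\cdot\PP\bigl(n(X_\pi-1)\in dt\bigr)\;\longrightarrow\; e^{t-1}\,dt,
\]
consistent with $\E[Z]=1$. I would then condition on a $\sigma$-algebra $\F_n\supset\sigma(U_n,V_n)$ enriched with a modest amount of boundary data (the labels of the minimizing edges, and all boundary edges of weight below, say, $(\log n)/n$), and apply the Chen--Stein inequality \emph{conditionally}. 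The task reduces to showing that the conditional pair sum
\[
b_2 \defi \sum_{\pi\ne\pi'}\PP\bigl(X_\pi\le 1+t/n,\,X_{\pi'}\le 1+t/n\bigm|\F_n\bigr)
\]
tends to $0$ in $\F_n$-probability; combined with the conditional first moment this yields
\[
\E\bigl[e^{-\Xi_n(f)}\bigm|\F_n\bigr]\;\longrightarrow\; \exp\!\Bigl(-Z_n\!\int (1-e^{-f(x)})\,e^{x-1}\,dx\Bigr)
\]
in probability, and averaging over $\F_n$ delivers the Cox Laplace functional. The formula for $m_n$ then follows from the void probability: with $U,V\sim\mathrm{Exp}(1)$ independent, integrating out $V$ first gives $\E[e^{-UVe^{t-1}}] = \int_0^\infty \tfrac{e^{1-t}}{e^{1-t}+x}e^{-x}\,dx$, which is precisely $1$ minus the stated integral.

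The principal obstacle is the control of $b_2$. The \emph{unconditioned} second moment of $\Xi_n([-T,T])$ is not of the correct order: pairs of paths sharing many small edges have anomalously large joint probability of being close to the minimum. The multiscale refinement of the second moment method introduced in \cite{kss} resolves this by stratifying pairs $(\pi,\pi')$ according to the depth at which they first diverge and showing scale-by-scale cancellation; once the endpoint scales have been frozen into $\F_n$ the cancellations carry over to the conditional setting. A contraction-type argument, together with the first-moment tightness, is then invoked to upgrade the conditional Laplace convergence along $Z_n\Rightarrow Z$ to the unconditional weak convergence of point processes claimed in \eqref{pf_lim}. The technical heart of the proof is thus the adaptation of the \cite{kss} multiscale machinery to the conditional Chen--Stein framework, with uniform control of the $\F_n$-dependence across all depth scales.
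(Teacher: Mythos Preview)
Your identification of the mixing variable is the gap. Conditioning on the minimum endpoint edges does \emph{not} produce $Z_n=U_nV_n$ as the conditional Poisson parameter. If you condition on the $\sigma$-algebra $\mathcal{F}_{1,n}$ generated by all edges incident to $\boldsymbol{0}$ and $\boldsymbol{1}$ (which is essentially your ``enriched'' $\F_n$), the conditional parameter is, up to the factor $e^{a-1}$,
\[
\Bigl(\sum_{i\leq n} e^{-n\,\xi_{(\boldsymbol{0},e_i)}}\Bigr)\Bigl(\sum_{j\leq n} e^{-n\,\xi_{(\boldsymbol{1}-e_j,\boldsymbol{1})}}\Bigr)\;\underset{n\to\infty}{\Longrightarrow}\; Z_1\,\widetilde Z_1,\qquad Z_1=\sum_{i\in\N} e^{-\eta_i},
\]
a full sum over the points of a PPP on $\R_+$, not the single term $e^{-U_n}$. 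The distribution of $Z_1$ is \emph{not} standard exponential (its Laplace transform is $\exp\!\int_0^1 u^{-1}(e^{-tu}-1)\,du$, not $(1+t)^{-1}$), so your claimed coincidence ``$U_n\sim\mathrm{Exp}(1)$ for every $n$, hence $Z_n$ already has the right law'' is a mis-identification. The paper conditions on depth-$r$ boundary data $\mathcal{F}_{r,n}$, obtains in the $n$-limit the tree-indexed quantities $Z_r\widetilde Z_r$ with $Z_r=\sum_{\pi\in\N^r}\exp\bigl(-\sum_{j\leq r}\eta_{\pi_1\cdots\pi_j}\bigr)$, and then proves $Z_r\Rightarrow\mathrm{Exp}(1)$ as $r\to\infty$ via a Banach fixed-point (contraction in Wasserstein-$2$) argument for the map $T\mu=\mathcal{L}\bigl(\sum_i e^{-\eta_i}X_i\bigr)$. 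This second limit is where the exponential law actually comes from; it cannot be read off at any finite depth.

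The $r\to\infty$ limit is equally essential on the Chen--Stein side. With depth-$1$ conditioning, pairs $(\pi,\pi')$ that share a single edge in the bulk but have small overlap (say $k\leq$ const) contribute a non-vanishing amount to the error: the paper's estimate for the $\sum_\star \E[I_\pi I_{\pi'}]$ term in the small-$k$ regime is of order $\sum_{k\geq r}(k+1)(3/4)^k$, which is $O(1)$ at $r=1$ and only tends to $0$ after $r\to\infty$. So your $b_2$ would \emph{not} vanish under the conditioning you propose, and the multiscale machinery from \cite{kss} does not supply the missing cancellation here --- that is precisely what the double-limit structure $n\to\infty$ then $r\to\infty$ is designed to handle. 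Your final Laplace computation for $m_n$ is correct, but the route to it needs the depth-$r$ conditioning and the fixed-point identification of $Z_r$.
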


It will become clear in the course of the proof, see in particular Remark \ref{universality} below, that the assumption on the distribution of the edge-weights is no restriction: any distribution in the extremality class of the exponentials (i.e. any distribution with similar behavior for small values, to leading order) will lead to the same limiting picture and weak limits. 
Although not needed, we also point out that the distribution of the mixture is given by $f(z) = 2 z^2 K_0(2 \sqrt{z})$, with $K_0$ a modified Bessel function of the second kind.

What lies behind the onset of the Cox processes is a {\it decoupling} whose origin can be traced back to the high-dimensional nature of the problem at hand. Indeed, the following mechanism, depicted in Figure \ref{paths_lim} below, holds with overwhelming probability in the limit $n\to \infty$ first, and $r \to \infty$ next: {\it Walkers connecting {\bf 0} to {\bf 1} through paths of minimal weight may share at most the first $r$ steps of their journey. Yet, and crucially: whenever they depart from one another ('branch off'), they cannot meet again until they lie at distance at most  $r$ from the target. If meeting happens, they must continue on the same path (no further branching is possible). } The long stretches during which optimal paths do not overlap are eventually responsible for the Poissonian component of the extremal process, whereas the mixing is due to the relatively short stretches of  tree-like (early and late) evolution of which the system keeps persistent memory. The picture is thus very reminiscent of the extremes of branching Brownian motion [BBM], see \cite{bovier} and references therein. More specifically, the extremal process of FPP on the hypercube can be (partly) seen as the "gluing together" of two extremal processes of BBM in the weak correlation regime as studied by Bovier and Hartung \cite{bovier_hartung, bovier_hartung_1}, see also \cite{derrida_spohn, fang_oz, fang_oz_1}.

\begin{figure}[h!] \label{paths_lim}
\includegraphics[scale=0.22]{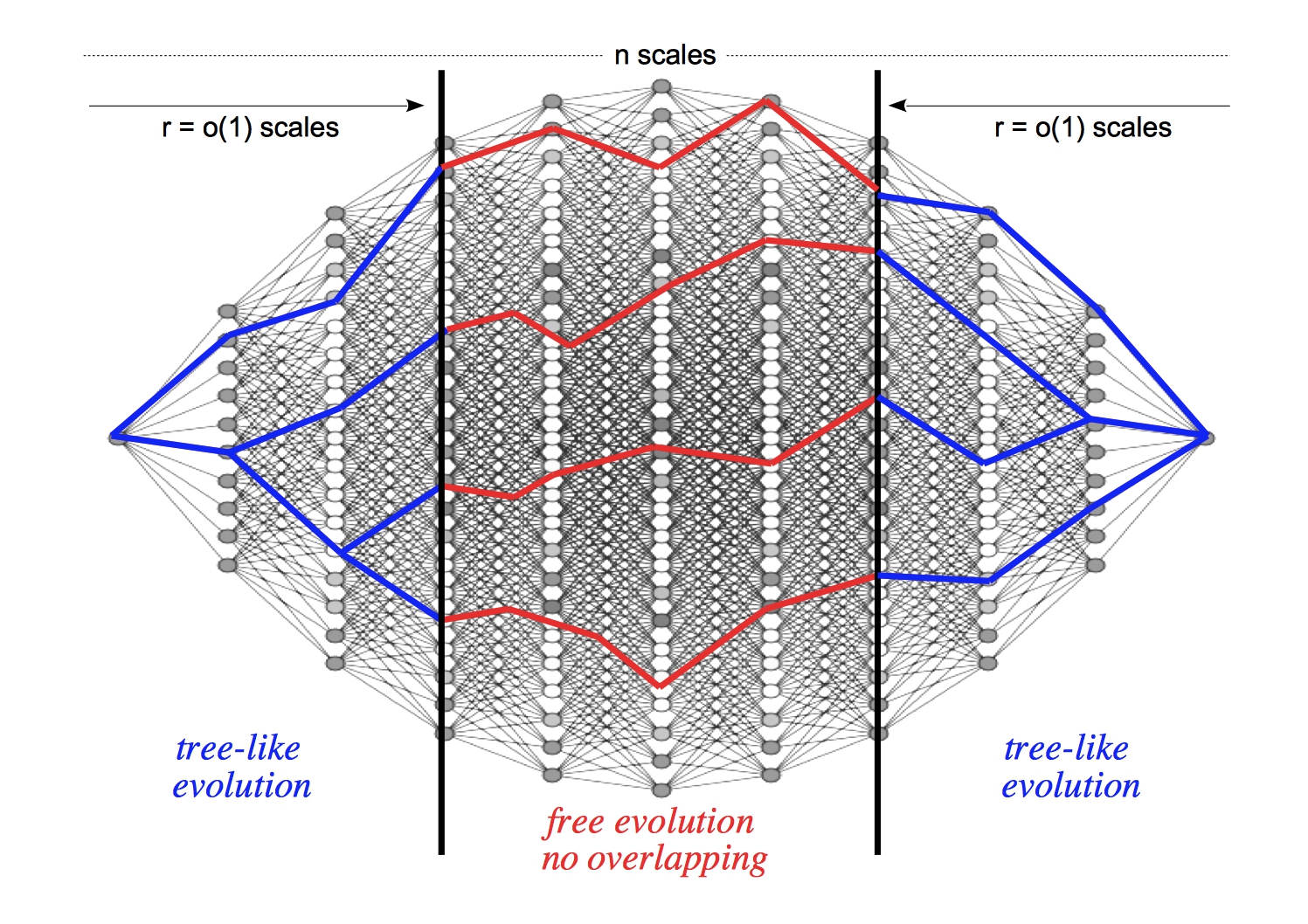}
\caption{Four extremal paths. Remark in particular the tree-like evolution close to {\bf 0} and {\bf 1} (blue edges) and the (comparatively) longer stretch where paths share no common edge (red). This should be contrasted with the low-dimensional scenario: "loops" in the core of the hypercube, as depicted  in Figure \ref{cube_paths}, become less and less likely as the dimension grows. }\label{paths_lim}
\end{figure}
${}$

{\bf Acknowledgements.} It is our pleasure to thank Ralph Neininger for much needed guidance in the field of contraction methods and distributional fixed points.

\section{Strategy of proof} \label{road}
The approach amounts to exploiting the insights on the physical mechanisms summarized in Figure \ref{paths_lim}. Specifically, we will check convergence of intensity and avoidance functions of the extremal process. To see how this comes about, we lighten notation by setting, for $A\subset \R$ a generic subset and $\pi$ an oriented path,
\[
 I_\pi(A) \defi \delta_{n(X_\pi-1)}(A), \quad \text{and}\quad \Xi_n(A) \defi \sum_{\pi \in \Sigma_n} I_\pi(A)\,.
\]
We then claim that with $Z$ as in Theorem \ref{pf}, and $A$ a finite union of bounded intervals:
\begin{itemize}
\item Convergence of the intensity:
\beq \label{intens}
\lim\E\, \Xi_n(A) \underset{n \to \infty}{\longrightarrow}  \E \int_A Z e^{x-1} dx
= \int_A e^{x-1} dx \,.
\eeq
\item Convergence of the avoidance function:
\beq  \label{avoid}
\PP\left( \Xi_n(A) = 0\right) \underset{n \to \infty}{\longrightarrow}  \PP\left(\Xi(A) = 0\right)= \E\left[\exp\left(-Z \int_{A} e^{x-1} dx\right)\right].
\eeq
\end{itemize}
Theorem  \ref{pf} then immediately follows in virtue of Kallenberg's Theorem \cite[Theorem 4.15]{kallenberg}. The proof of the claim on the intensity is rather straightforward: it only requires tail-estimates which we now state for they will be constantly used throughout the paper. (The simple proof may be found in \cite[Lemma 5]{kss}).

\begin{lem} \label{tail} Let $\{\xi_i\}_{i\leq n}$ be  independent standard exponentials, and set $X_n \defi \sum_{i=1}^n \xi_i$. Then
\beq \label{law_i}
\PP\left( X_n\leq x\right) = \left(1+K(x,n) \right)\frac{e^{- x} x^n}{n!},
\eeq
for $x>0$ and with the error-term satisfying $0\leq K(x,n)\leq e^{ x} x/(n+1).$  \\
\end{lem}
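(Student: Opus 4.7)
The plan is to leverage the standard Poisson--Gamma duality. Since $X_n$ is a sum of $n$ independent standard exponentials, it is $\mathrm{Gamma}(n,1)$-distributed, and its CDF coincides with the upper tail of a Poisson random variable:
\[
\PP(X_n \leq x) = \PP(\Poi(x) \geq n) = e^{-x}\sum_{k=n}^\infty \frac{x^k}{k!}.
\]
This identity can be verified in one line by integration by parts on the Gamma density, or equivalently by checking that both sides agree at $x=0$ and have matching derivatives in $x$. Taking it as the starting point reduces the lemma to a transparent manipulation of the exponential series.

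Next I would factor out the leading term of the series to write
\[
\PP(X_n \leq x) = \frac{e^{-x} x^n}{n!}\left(1 + \sum_{k=1}^\infty \frac{x^k\, n!}{(n+k)!}\right),
\]
and \emph{define} $K(x,n)$ as exactly this tail sum. Non-negativity is then immediate since every summand is non-negative.

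For the upper bound, I would pull a factor $x/(n+1)$ out of each term to obtain
\[
K(x,n) = \frac{x}{n+1}\sum_{k=0}^\infty \frac{x^k}{(n+2)(n+3)\cdots(n+k+1)},
\]
and then invoke the elementary inequality $(n+2)(n+3)\cdots(n+k+1) \geq k!$, which follows term-by-term from $n+1+j \geq j$ for $j=1,\dots,k$ and $n\geq 0$. The remaining series is dominated by $\sum_{k\geq 0} x^k/k! = e^x$, yielding $K(x,n)\leq xe^x/(n+1)$.

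I do not anticipate any genuine obstacle here; the argument is essentially bookkeeping. The one strategic decision is to open with the Poisson identity rather than integrating the Gamma density directly, because doing so makes the error term appear organically as a remainder of an exponential series and renders the claimed bound $K(x,n)\leq xe^x/(n+1)$ a one-line estimate rather than a delicate calculation.
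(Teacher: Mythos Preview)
Your argument is correct. The Poisson--Gamma identity $\PP(X_n\leq x)=e^{-x}\sum_{k\geq n}x^k/k!$ is standard, your factorisation of the leading term is accurate, and the termwise bound $(n+2)\cdots(n+k+1)\geq k!$ gives exactly $K(x,n)\leq xe^{x}/(n+1)$.

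As for comparison: the present paper does not actually prove this lemma; it only cites the companion paper \cite[Lemma~5]{kss} with the remark that the proof is simple. So there is no in-paper argument to weigh yours against. Your route via the Poisson tail is arguably the most natural one and is almost certainly what the authors have in mind; the only alternative of comparable brevity is to write the Gamma CDF as $\int_0^x t^{n-1}e^{-t}\,dt/(n-1)!$ and integrate by parts repeatedly, which regenerates the same series. Either way the content is identical, and your presentation is clean.
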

Armed with these estimates we can proceed to the short proof of \eqref{intens}. Here and below, we will always consider sets of the form $A = \left(-\infty,a\right]$ , $a\in \R$. This is enough for our purposes since the general case follows by additivity. It holds:
\beq \bea
\E \Xi_n(A) &= \sum\limits_{\pi \in \Sigma_n} \PP\left( n(X_\pi - 1) \leq a \right) \\
&= n!\PP\left( n(X_{\pi^{*}} - 1) \leq a \right) \qquad \text{(symmetry, $\pi^{*} \in \Sigma_n$ is arbitrary)} \\
&= n! \left\{ 1+K\left(1+\frac{a}{n},n \right) \right\} e^{- 1-\frac{a}{n}} \left(\left(1+\frac{a}{n}\right)^+\right)^{n} (n!)^{-1} \qquad \text{(Lemma \ref{tail})}\\
&=(1+o_n(1))e^{- 1+a} \\
& = (1+o_n(1)) \int_A e^{x-1} dx,
\eea \eeq
as claimed.  Convergence of the intensity \eqref{intens}  is thus already settled.  \\

\noindent Contrary to convergence of the intensity, convergence of avoidance functions \eqref{avoid} will require a fair amount of work. This will be split in a number of intermediate steps. The main ingredient is a conditional version of the Chen-Stein bounds:

\begin{teor} [Conditional Chen-Stein Method]\label{Cond_ChenStein}
Consider a probability space $(\Omega,\mathscr{F},\PP)$, a sigma-algebra $\F \subset \mathscr{F}$, a finite set $I$, and a family $(X_i)_{i\in I}$ of Bernoulli random variables issued on this space. Let furthermore
\[
W=\sum_{i\in I} X_i \quad \text{and} \quad \lambda=\sum_{i\in I}\E(X_i |\F )\,.
\]
Finally, consider a random variable $\widehat{W}$ with the property that its law conditionally upon $\F$ is Poisson, i.e.  $\mathcal{L}(\widehat{W}|\F)=\Poi(\lambda)$. It then holds:
\beq \label{ccs}
d_{TV|\F}(W,\widehat{W})\leq \sum_{i\in I}\E(X_i|\F )^2+\sum_{i\in I}\sum_{j\in N_i}(\E(X_i|\F )\E(X_j|\F )+\E(X_iX_j|\F ))\,,
\eeq
where
\[
d_{TV|\F}(W,\widehat{W}) \defi \sup_{A \in \mathscr{F}}\left(\PP_W(A|\F)-\PP_{\widehat{W}}(A|\F)\right)
\]
is the total variation distance conditionally upon $\F$. Finally, $N_i, i \in I$ is a collection of conditionally dissociating neighborhoods, i.e. with the property that $X_i$ and $ \{ X_j : j\in (N_i\cup \{i\})^c \}$ are independent, conditionally upon $\F$.
\end{teor}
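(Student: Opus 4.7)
My plan is to prove Theorem \ref{Cond_ChenStein} by adapting the classical Chen-Stein argument via Stein's equation, carried out ``pointwise'' conditionally on $\F$. Recall that for bounded $h:\N\to\R$ and deterministic $\lambda>0$ the Stein transform $g=g_{h,\lambda}$ is the function vanishing at $0$ that solves
\beq
 \lambda\, g(k+1) - k\, g(k) = h(k) - \E\, h(\Poi(\lambda)),\qquad k\in\N,
\eeq
and whose discrete derivative obeys the standard bound $\|\Delta g\|_\infty\leq 1\wedge \lambda^{-1}$ when $h=\1_A$; here $\Delta g(k)\defi g(k+1)-g(k)$. Since $\lambda$ is $\F$-measurable and $g_{h,\lambda}(k)$ is an explicit linear combination of the Poisson weights $\PP(\Poi(\lambda)=k)$ --- hence jointly measurable in $(\lambda,k)$ --- the Stein identity can be evaluated at the random point $k=W$ and integrated against $\PP(\,\cdot\,|\F)$. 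Using the defining property of $\widehat{W}$, this yields
\beq
 \E(\1_A(W) - \1_A(\widehat{W})\,|\,\F) = \E(\lambda\, g(W+1) - W\, g(W)\,|\,\F).
\eeq

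The second step estimates the right-hand side. Writing $W=X_i+W_i$ with $W_i\defi W-X_i$, the identity $X_i^2=X_i$ gives $X_i\, g(W) = X_i\, g(W_i+1)$, whence
\beq
 \E(\lambda\, g(W+1) - W g(W)\,|\,\F) = \sum_{i\in I}\Bigl\{\E(X_i|\F)\,\E(g(W+1)|\F) - \E(X_i\, g(W_i+1)|\F)\Bigr\}.
\eeq
Each summand splits naturally into a ``self-interaction'' term $\E(X_i|\F)\bigl(\E(g(W+1)|\F) - \E(g(W_i+1)|\F)\bigr)$ and a ``conditional covariance'' term $\E(X_i|\F)\E(g(W_i+1)|\F) - \E(X_i\, g(W_i+1)|\F)$. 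The first is dispatched by the telescoping identity $g(W+1)-g(W_i+1)= X_i\,\Delta g(W_i+1)$ combined with $\|\Delta g\|_\infty\leq 1$, which produces the bound $\E(X_i|\F)^2$.

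For the covariance term I would decompose $W_i = U_i + Z_i$ with $U_i\defi\sum_{j\in N_i\setminus\{i\}}X_j$ and $Z_i\defi\sum_{j\notin N_i\cup\{i\}}X_j$. Replacing $g(W_i+1)$ by $g(Z_i+1)$ in both appearances costs at most $U_i\|\Delta g\|_\infty$; after the replacement, the conditional dissociating hypothesis $X_i \perp Z_i$ given $\F$ annihilates the remaining conditional covariance, leaving the estimate
\beq
\bigl|\E(X_i|\F)\E(g(W_i+1)|\F) - \E(X_i\, g(W_i+1)|\F)\bigr| \leq \sum_{j\in N_i\setminus\{i\}}\bigl(\E(X_i|\F)\E(X_j|\F) + \E(X_iX_j|\F)\bigr).
\eeq
Summing the self-interaction and covariance contributions over $i\in I$ and taking a supremum over indicators $h=\1_A$ on the left produces the bound \eqref{ccs}.

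The only genuine departure from the unconditional Chen-Stein scheme is bookkeeping measurability: one must verify that $g_{h,\lambda}$ depends $\F$-measurably on the $\F$-measurable parameter $\lambda$, so that $\E(g(W+1)|\F)$ is well defined as a random variable and the inequalities above hold almost surely. This is the main technical obstacle specific to the theorem, but it is a cheap one: the explicit series representation of $g$ in terms of the Poisson weights provides joint measurability for free. Beyond this, the argument is the classical Chen-Stein proof executed inside $\PP(\,\cdot\,|\F)$, with no new probabilistic input required; the genuinely hard work, naturally, is downstream, where one must construct, for the FPP application, conditional dissociating neighborhoods $N_i$ for which the right-hand side of \eqref{ccs} actually vanishes in the limit.
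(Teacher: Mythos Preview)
Your proof is correct and follows essentially the same Stein-equation approach as the paper: set up the conditional Stein transform $g_{A,\lambda}$, use the identity $\lambda g(W+1)-Wg(W)=\1_A(W)-\PP(\widehat W\in A\mid\F)$, and control the right-hand side via the Lipschitz bound $\|\Delta g\|_\infty\le 1$ together with the dissociating-neighborhood hypothesis. The only cosmetic difference is in the decoupling step: the paper introduces an auxiliary variable $S^{(i)}$ distributed as $\sum_{j\in N_i}X_j$ conditionally on $\{X_i=1\}$ and $\F$, whereas you add and subtract $g(Z_i+1)$ directly and telescope; both are standard variants and yield the identical bound.
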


Theorem \ref{Cond_ChenStein} is a variant of the classical Chen-Stein method which is tailor-suited to our purposes. Since we haven't found in the literature any similar statement, we provide the rather short proof in the appendix for completeness.

In order to prove convergence of the avoidance functions, we will apply Theorem \ref{Cond_ChenStein} by  conditioning on the left- and rightmost regions of Figure \ref{paths_lim}, namely those regions where tree-like evolutions eventually kick in.  Specifically, in order to apply the conditional Chen-Stein, we make the following choices:

\begin{itemize}
\item[a)] $I  \defi \Sigma_n$, the set of admissible (oriented) paths connecting {\bf 0} to {\bf 1 }.
\item[b)] $\F$ is the sigma-algebra generated by the weights of edges at distance at most $r$ from $\boldsymbol{0}$ or $\boldsymbol{1}$, to wit
\[
\F = \mathcal{F}_{r,n} \defi \sigma( \xi_{e}: e=(u,v)\in E, \min \{d(u,\boldsymbol{0}),d(v,\boldsymbol{0})\}\in \left[0,r\right) \cup \left[n-r,n\right) )\,.
\]
\item[c)] The family of Bernoulli r.v.'s is given by $\left( I_\pi(A) \right)_{\pi \in \Sigma_n}$.
\item[d)] The (random) Poisson-parameter is
\[
\la = \la_{r,n}(A) \defi  \sum\limits_{\pi\in \Sigma_n} \E\left[ I_{\pi}(A)\mid \mathcal{F}_{r,n} \right]
\]
\item[e)] The dissociating neighborhoods are given, for $\pi \in\Sigma_n$, by
\[
N_\pi \defi \{\pi'\in \Sigma_n \setminus\{\pi\}: \exists i\in \{r+1,..,n-r\} \mbox{ s.t. } [\pi]_i = [\pi']_i\}
\]
\end{itemize}
A first, fundamental observation concerns item d), namely the weak convergence of the Poisson-parameter in the double limit $n\to \infty$ first and $r \to \infty$ next. This is an instructive warm-up computation which we now explain. 

Denote the set of all pairs of paths leading $r$-steps away from the start/end respectively, and which can be part of an oriented path from $\boldsymbol{0}$ to $\boldsymbol{1}$ by
\beq
\bea
\mathcal{V}_{r,n}=&\{(x,y)\in V^{r+1}\times V^{r+1}: x_1 = \boldsymbol{0}, d(x_{r+1},\boldsymbol{0}) = r, d(y_{1},\boldsymbol{1}) = r, y_{r+1} = \boldsymbol{1},\\
& \hspace{4.5cm} y_1-x_{r+1}\in V, (x_i,x_{i+1}),(y_i,y_{i+1})\in E, \forall i\leq r\}.
\eea
\eeq
Note that $y_1-x_{r+1}\in V$ is equivalent to there being a directed path from $\boldsymbol{0}$ to $\boldsymbol{1}$ containing $x$ and $y$.   For $(x,y) \in \mathcal{V}_{r,n}$ we define the set of paths connecting $x$ and $y$ by
\beq\bea \Sigma_{x,y} \defi &\{ \pi' \in V^{n-2r+1}: \exists \pi\in \Sigma_n \mbox{ s.t. } ([\pi]_{i})_{i\leq r} = ([x]_i)_{i\leq r}\quad \text{and}  \\
& \qquad \quad ([\pi]_{i})_{r<i\leq  n-r} = ([\pi']_i)_{r< i \leq  n-r}, ([\pi]_{i})_{ i > n-r} = ([y]_i)_{i > n-r}\}. \eea\eeq
By definition,
\beq \bea \label{la_one}
\la_{r, n}(A) = & \sum\limits_{\pi\in \Sigma_n}\PP\left(n(X_\pi- 1) \leq a\Big| \mathcal{F}_{r,n} \right)  \\
&= \sum\limits_{(x,y) \in \mathcal{V}_{r,n}}\sum\limits_{\pi'\in \Sigma_{x,y}}\PP\left(\sum\limits_{i=1}^{n-2r}\xi_{[\pi']_i} \leq 1+ \frac{a}{n}-\sum\limits_{i=1}^{r}\xi_{[x]_i} +\xi_{[y]_i}  \Big| \mathcal{F}_{r,n} \right). \eea\eeq
Shorten $$X_{x,y} \defi\sum\limits_{i=1}^{r}\xi_{[x]_i} +\xi_{[y]_i} .$$ By Lemma \ref{tail}, and since $|\Sigma_{x,y}| = (n-2r)!$, the r.h.s. of  \eqref{la_one} equals
\beq\label{cond_exp_eq1}
\sum\limits_{(x,y) \in \mathcal{V}_{r,n}} \left(1+K(1+ \frac{a}{n}-X_{x,y},n-2r) \right)\exp\left( -1- \frac{a}{n}+X_{x,y}\right)\left( \left(1+ \frac{a}{n}-X_{x,y}\right)^{+}\right)^{n-2r}.
\eeq
By the tail-estimates from Lemma \ref{tail}, the following holds
\[
K\left(1+ \frac{a}{n}-X_{x,y},n-2r \right)  \leq \frac{2e^2}{n-2r}\,,
 \]
for all non-zero summands, and $n\geq a$. Remark that there are  $O(n^{2r})$ such summands, while $r$ and $a$ are fixed: one easily checks that dropping all summands where $X_{x,y}> (\ln n)^2/n$ only causes a deterministically vanishing error, hence
\beq \bea \label{cond_exp_eq2}
\eqref{cond_exp_eq1} &= (1+o_n(1)) \left( o_n(1)+ e^{-1}\sum\limits_{(x,y) \in \mathcal{V}_{r,n}} \mathbbm{1}_{\{X_{x,y} \leq  \frac{(\ln n)^2}{n}\}} \exp\left( (n-2r)\ln\left(1+ \frac{a}{n}-X_{x,y}\right)^{+}\right)\right)\\
&=(1+o_n(1))\left( o_n(1)+ e^{-1+a}\sum\limits_{(x,y) \in \mathcal{V}_{r,n}} \exp\left( - n X_{x,y}\right)\right) \\
& = (1+o_n(1))\left( o_n(1)+ e^{-1+a}\sum\limits_{(x,y) \in \mathcal{V}_{r,n}} \exp - n \sum_{i=1}^{r} \left(  \xi_{[x]_i} +\xi_{[y]_i}  \right) \right)\,.
\eea\eeq
the second step by Taylor-expanding the logarithm around $1$ to first order, and the third by definition. \\

We now address the sum on the r.h.s. of \eqref{cond_exp_eq2}, on which we perform the aforementioned double limit $n\to \infty$ first and $r\to \infty$ next. The upshot is summarized in Proposition \ref{double_limit} below, whose proof --- via a contraction argument --- is deferred to Section \ref{doub_weak}. To formulate, we need some additional notation: for $\pi_1,..,\pi_{i-1}\in \N$ and $i\leq r$ we denote by
\[
\left(\eta_{\pi_1,..,\pi_{i-1}, \pi_i} \right)_{\pi_i \in \N}, \quad \text{and}\quad \left(\tilde{\eta}_{\pi_1,..,\pi_{i-1}, \pi_i}\right)_{\pi_i\in \N}
\]
independent Poisson point processes [PPP] with intensity $\mathbbm{1}_{\R^+}dx$, and set
\beq \label{z}
Z_{r} \defi  \sum_{\pi \in \N^r}\exp\left(- \sum_{j=1}^{r}\eta_{\pi_1\pi_2...\pi_j}\right), \quad \widetilde Z_{r} \defi  \sum_{\pi \in \N^r}\exp\left(- \sum_{j=1}^{r} \tilde \eta_{\pi_1\pi_2...\pi_j}\right)\,.
\eeq

\begin{prop}  \label{double_limit} (The double weak-limit).
\begin{itemize}
\item \emph{n-convergence}: the following weak limit, to fixed $r$, holds:
\[
\lim_{n\to \infty} \sum\limits_{(x,y) \in \mathcal{V}_{r,n}} \exp -n \sum\limits_{l=1}^r \left( \xi_{[x]_l}+\xi_{[y]_l} \right) =  Z_r \times \widetilde Z_r\,.
\]
\item \emph{r-convergence}: $Z_r$ and $\widetilde Z_r$ weakly converge, as $r\to \infty$, to independent standard exponentials.
\end{itemize}
\end{prop}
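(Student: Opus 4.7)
The plan is to handle the two statements separately: the $n$-convergence is a Poisson-process limit hinging on the high-dimensional decoupling of the two trees rooted at $\boldsymbol 0$ and $\boldsymbol 1$, while the $r$-convergence is a distributional fixed-point statement established by a Wasserstein-type contraction in the spirit of Neininger.

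For the $n$-convergence, I first note that the constraint $y_1 - x_{r+1} \in V$ in the definition of $\mathcal V_{r,n}$ requires the $r$ coordinates flipped by $x$ to be disjoint from the $r$ coordinates not yet flipped going from $y_1$ to $\boldsymbol 1$. For fixed $r$, this holds for a fraction $\binom{n-r}{r}/\binom{n}{r} = 1 + o_n(1)$ of the compatible $y$, so the sum over $\mathcal V_{r,n}$ asymptotically factorizes into a ``$\boldsymbol 0$-sum'' and a ``$\boldsymbol 1$-sum'' that are independent since they involve disjoint edges. It then suffices to show that $\sum_x \exp(-n \sum_{l=1}^r \xi_{[x]_l}) \xrightarrow{d} Z_r$. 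At any vertex at Hamming distance $k<r$ from $\boldsymbol 0$, the $n-k$ rescaled outgoing weights $(n\xi_e)$ form a point process with intensity $(n-k) n^{-1} e^{-x/n}\,dx \to dx$, hence converge to a unit-intensity PPP on $[0,\infty)$. Edge independence upgrades this to joint convergence, indexed by the tree $\N^{\leq r}$, to a family of independent PPPs $(\eta_{\pi_1 \ldots \pi_j})$, and a standard truncation plus continuous mapping transfers it to the functional defining $Z_r$.

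For the $r$-convergence, peeling off the first layer of the defining tree yields the distributional fixed-point equation
\[
Z_r \stackrel{d}{=} \sum_{k \in \N} e^{-\eta_k}\, Z_{r-1}^{(k)},
\]
with $\{\eta_k\}$ a unit-intensity PPP on $[0,\infty)$ and $(Z_{r-1}^{(k)})$ iid copies of $Z_{r-1}$ independent of $\{\eta_k\}$. Campbell's formula gives $\E[\sum_k e^{-\eta_k}] = 1$ and $\E[\sum_k e^{-2\eta_k}] = 1/2$, whence $\E[Z_r] = 1$ and the uniformly bounded recursion $\E[Z_r^2] = \tfrac{1}{2}\E[Z_{r-1}^2] + 1 \leq 2$. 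A direct substitution into the Laplace-transform version $F_r(t) = \exp(-\int_0^t (1 - F_{r-1}(u)) u^{-1}\,du)$ shows that $F_\infty(t) = 1/(1+t)$, the Laplace transform of $X \sim \mathrm{Exp}(1)$, is a fixed point --- equivalently, $X \stackrel{d}{=} \sum_k e^{-\eta_k} X^{(k)}$ with $X^{(k)}$ iid $\mathrm{Exp}(1)$, independent of $\{\eta_k\}$. To establish attraction, I would work in the Wasserstein-$2$ metric $d_2$: coupling $(Z_{r-1}^{(k)}, X^{(k)})$ optimally across $k$, independently of $\{\eta_k\}$, the cross-terms in $\E[(Z_r - X)^2]$ all vanish because $\E[Z_{r-1}] = \E[X] = 1$, leaving
\[
\E[(Z_r - X)^2] = \E\left[\sum_k e^{-2\eta_k}\right]\, d_2^2(Z_{r-1}, X) = \tfrac{1}{2}\, d_2^2(Z_{r-1}, X).
\]
Iteration from the initialization $d_2(Z_0, X) = 1$ yields geometric decay, hence weak convergence $Z_r \Rightarrow \mathrm{Exp}(1)$. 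Joint independence of $Z_r$ and $\widetilde Z_r$ holds for every $r$ by construction (independent PPPs $\eta$ and $\widetilde\eta$), so the joint limit to independent exponentials is automatic.

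The main obstacle will be the rigor of the infinite-sum contraction step: establishing a.s. finiteness of the random sums (via summability of $\sum_k e^{-\eta_k}$, readily available from $\E[\sum_k e^{-\eta_k}] = 1$), simultaneously realizing independent optimal couplings across countably many children of each node, and checking that the interchange of expectation and infinite summation in the Campbell-type identities is justified. The $n$-convergence is, by comparison, largely bookkeeping once the two-root decoupling is in place.
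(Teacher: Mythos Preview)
Your $r$-convergence argument is essentially the paper's: both set up the recursion $\mathcal L(Z_{r+1})=T\mathcal L(Z_r)$, show $T$ is a strict $\ell_2$-contraction on mean-one laws via $\E[\sum_k e^{-2\eta_k}]=1/2$, and check that $\mathrm{Exp}(1)$ is the fixed point through its Laplace transform. The paper phrases this as Banach on $\mathcal P_{2,1}$, but the content is identical.

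Your $n$-convergence sketch has a gap. The decoupling of the $\boldsymbol 0$-sum from the $\boldsymbol 1$-sum is fine and matches the paper's enlargement of $\mathcal V_{r,n}$ to a product set. The problem is the subsequent step: ``edge independence upgrades this to joint convergence, indexed by the tree $\N^{\leq r}$, to a family of independent PPPs,'' followed by continuous mapping. The set of oriented length-$r$ paths from $\boldsymbol 0$ in the hypercube is \emph{not} a tree for $r\geq 3$: two paths with different first steps can merge and then share edges (e.g.\ $0\to e_1\to e_1{+}e_2$ and $0\to e_2\to e_1{+}e_2$ both continue along the \emph{same} edge $(e_1{+}e_2,\,e_1{+}e_2{+}e_3)$). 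Hence the hypercube functional $\sum_x\exp(-n\sum_l\xi_{[x]_l})$ is not a function of independent PPPs indexed by tree-nodes, and a bare continuous-mapping argument does not deliver $Z_r$.

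The paper's proof is devoted precisely to closing this gap. It runs an induction on $r$: after peeling off the first edge and truncating to the finitely many $i$ with $n\xi_{(\boldsymbol 0,e_i)}\leq K_\varepsilon$, it observes that the number of length-$(r{+}1)$ paths with first step $e_i$ that can share any edge with a path starting at some $e_j$, $j\neq i$, is $O(n^{r-1})$, while each such path has expected weight $(n+1)^{-r}$; the overlap contribution therefore vanishes in $L^1$. Only after swapping these shared-edge summands for independent copies does the structure become genuinely tree-like and the induction close against $Z_{r+1}=\sum_i e^{-\eta_i}Z_r^{(i)}$. Your proposal needs an analogous overlap-removal step; without it the ``continuous mapping'' is being applied to the wrong functional.
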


The n-convergence is a key ingredient in Figure \ref{paths_lim} above. Indeed, remark that both limits $Z_r$ and $\widetilde Z_r$ are constructed outgoing from {\it hierarchical}\footnote{Superpositions of PPP such as those involved in \eqref{z} are ubiquitous in the Parisi theory of mean field sping glasses, see  \cite{kistler} and references, where they are referred to as \emph{Derrida-Ruelle cascades}. Although no knowledge of the Parisi theory is assumed/needed, our approach to the oriented FPP in the limit of large dimensions heavily draws on ideas which have recently crystallised in that field.} superpositions of PPP: this accounts for the somewhat surprising fact that close to {\bf 0} and {\bf 1} only tree-like structures contribute to the extremal process in the mean field limit. \\

Proposition \ref{double_limit} and \eqref{cond_exp_eq2} steadily imply convegence of the Poisson-parameter:

\begin{cor} \label{conv_lambda}
With the above notations,
\[
\lim_{r \to \infty} \lim_{n \to \infty} \la_{r,n}(A) = Z \int_A e^{x-1} dx,
\]
weakly.
\end{cor}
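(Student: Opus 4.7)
The plan is to assemble equation \eqref{cond_exp_eq2} with Proposition \ref{double_limit} in a direct manner; essentially all of the analytic content has already been packaged into those two statements. By additivity it suffices to treat $A=(-\infty,a]$, as noted after \eqref{intens}. Introducing the shorthand
\[
S_{r,n} \defi \sum_{(x,y)\in \mathcal{V}_{r,n}} \exp\Bigl(-n\sum_{i=1}^r\bigl(\xi_{[x]_i}+\xi_{[y]_i}\bigr)\Bigr),
\]
equation \eqref{cond_exp_eq2} reads $\la_{r,n}(A) = (1+o_n(1))\bigl(o_n(1)+e^{a-1}S_{r,n}\bigr)$, where the $o_n(1)$ terms are deterministic. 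The problem thus reduces to controlling $S_{r,n}$ under the iterated limit $\lim_{r\to\infty}\lim_{n\to\infty}$.

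For the inner limit, the n-convergence half of Proposition \ref{double_limit} gives that $S_{r,n}$ converges weakly to $Z_r\widetilde Z_r$ as $n\to\infty$, with $r$ fixed. Slutsky's lemma, absorbing the deterministic $o_n(1)$ errors, then yields that $\la_{r,n}(A)$ converges weakly, as $n\to\infty$, to $e^{a-1}\,Z_r\widetilde Z_r$. For the outer limit, observe that $Z_r$ and $\widetilde Z_r$ are built from the families $(\eta_{\pi_1\dots\pi_j})$ and $(\tilde\eta_{\pi_1\dots\pi_j})$, which are independent by construction; hence $(Z_r,\widetilde Z_r)$ has product law. The r-convergence half of Proposition \ref{double_limit} identifies the marginal weak limits as two independent standard exponentials $E,\widetilde E$, so the joint law converges to that of $(E,\widetilde E)$. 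The continuous mapping theorem applied to multiplication then gives $Z_r\widetilde Z_r \to E\widetilde E$ weakly, whose distribution coincides with that of $Z$ in Theorem \ref{pf}. Since $\int_{-\infty}^a e^{x-1}\,dx=e^{a-1}$, chaining the two convergences delivers the advertised identity.

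No serious obstacle arises: the sharp tail estimate of Lemma \ref{tail}, already digested in \eqref{cond_exp_eq2}, and the hierarchical/contraction analysis underlying Proposition \ref{double_limit} have either been proved already or are deferred to a later section. The only conceivable wrinkle lies in asserting that weak convergence of independent factors implies weak convergence of their product, but that is a routine consequence of the product-measure characterization of independence together with continuity of multiplication. All of the genuine technical difficulty of this step has therefore been outsourced to Proposition \ref{double_limit}.
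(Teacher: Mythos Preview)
Your proposal is correct and follows exactly the route the paper indicates: the paper simply states that Proposition~\ref{double_limit} together with \eqref{cond_exp_eq2} ``steadily imply'' the corollary, and you have merely written out the straightforward Slutsky/continuous-mapping details that make this precise. There is no additional argument in the paper to compare against.
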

We now come back to the main task of proving \eqref{avoid}, convergence of the avoidance functions. The line of reasoning goes as follows: recalling that $\Xi_n(A) = \sum_{\pi \in \Sigma_n} I_{\pi}(A)$, we write

\beq \bea \label{flow_one}
& \left| \PP\left( \Xi_n(A) = 0 \right) - \PP\left( \Xi(A) = 0 \right) \right| \\
& \qquad = \left| \E \PP\left( \Xi_n(A) = 0 \mid \F_{r,n} \right) - \E \PP\left( \Xi(A) = 0 \mid Z \right) \right| \\
& \qquad \qquad \leq \left| \E \PP\left( \Xi_n(A) = 0 \mid \F_{r,n} \right) -  \PP\left( \Poi\left(\la_{r,n}(A) \right)=0 \mid \F_{r,n} \right) \right|  \\
& \qquad \qquad \qquad + \left|  \E \PP\left( \Poi\left(\la_{r,n}(A) \right)=0 \mid \F_{r,n} \right)- \E \PP\left( \Xi(A) = 0 \mid Z \right) \right|,
\eea \eeq
by the triangle inequality. By convexity, one has  
\beq \bea \label{flow_two}
& \left| \E \PP\left( \Xi_n(A) = 0 \mid \F_{r,n} \right) -  \PP\left( \Poi\left(\la_{r,n}(A) \right)=0 \mid \F_{r,n} \right) \right|  \\
& \qquad \leq\E   \left|\PP\left( \Xi_n(A) = 0 \mid \F_{r,n} \right) - \PP\left( \Poi\left(\la_{r,n}(A) \right)=0 \mid \F_{r,n} \right) \right|  \\
& \qquad \leq \E d_{TV,\mathcal{F}_{r,n}} \left( \Xi_n(A), \Poi\left(\la_{r,n}(A) \right) \right) \\
& \qquad =: \text{CS}(r,n),  \quad \text{say}. \\
\eea \eeq
Furthemore, by definition
\beq \bea \label{flow_tree}
& \left|  \E \PP\left( \Poi\left(\la_{r,n}(A) \right)=0 \mid \F_{r,n} \right)- \E \PP\left( \Xi(A) = 0 \mid Z \right) \right| \\
& =\left| \E \left(e^{-\la_{r,n}(A)}-e^{-Z \int_A e^{x-1} dx}\right) \right| \\
& =: \text{P}(r,n), \quad \text{say}.
\eea \eeq
It thus follows from \eqref{flow_one},  \eqref{flow_two} and  \eqref{flow_tree} that
\beq \bea \label{flow_four}
& \left| \PP\left( \Xi_n(A) = 0 \right) - \PP\left( \Xi(A) = 0 \right) \right| \leq \text{CS}(r,n) + \text{P}(r,n).
\eea \eeq

The second term is easily seen to vanish thanks to the convergence of the Poisson-parameter: it follows from Corollary \ref{conv_lambda} and weak limit that 

\beq \label{par}
\lim_{r \to \infty} \lim_{n\to \infty} \text{P}(r, n) = 0.
\eeq
We finally claim that the first term in \eqref{flow_four}, the "Chen-Stein term", also vanishes in the considered double-limit, to wit:
\beq \label{cs_vanishes}
\lim_{r \to \infty} \lim_{n\to \infty} {\text{CS}}(r,n) = 0\,.
\eeq
This claim is proved in Section \ref{vanishing_cs} as an application of the conditional Chen-Stein method.

Combining \eqref{par} and \eqref{cs_vanishes} we thus obtain convergence of the avoidance function: since this was the last missing ingredient, our main Theorem \ref{pf} follows.

\section{Proofs}

\subsection{The double weak-limit} \label{doub_weak}
The goal of this section is to prove Proposition \ref{double_limit}. We first address the n-convergence, which states
that
\beq \label{conv_first}
\lim_{n\to \infty} \sum\limits_{(x,y) \in \mathcal{V}_{r,n}} \exp\left( -n \sum\limits_{l=1}^r \xi_{[x]_l}+\xi_{[y]_l} \right) = Z_r \times \widetilde Z_r,
\eeq
weakly, where $Z_r, \widetilde Z_r$ are defined in \eqref{z}. The idea here is to enlarge the set of paths over which the sum is taken, as this enables a useful decoupling. Precisely, consider the set of directed paths of length $r$ {\it from $\boldsymbol{0}$},
\beq
\mathcal{V}^{\leftarrow}_{r,n}=\{x\in V^{r+1}: x_1 = \boldsymbol{0}, d(x_{r+1},\boldsymbol{0}) = r, [x]_i\in E, \forall i\leq r\}\,,
\eeq
and respectively {\it to} $\boldsymbol{1}$:
\beq
\mathcal{V}^{\rightarrow}_{r,n}=\{y\in V^{r+1}: y_{r+1} = \boldsymbol{1}, d(y_{1},\boldsymbol{1}) = r,[y]_i \in E, \forall i\leq r\}\,.
\eeq
One easily checks that
\beq \label{card_diff}
\left| \mathcal{V}^{\leftarrow}_{r,n}\times \mathcal{V}^{\rightarrow}_{r,n} \setminus \mathcal V_{r,n}\right| = O(n^{2r-1})\,.
\eeq
We split the sum over the larger subset into a sum over $\mathcal V_{r,n}$ and a "rest-term":
\beq \bea
& \sum\limits_{(x,y) \in \mathcal{V}^{\rightarrow}_{r,n}\times\mathcal{V}^{\leftarrow}_{r,n}  } \exp\left( -n \sum\limits_{l=1}^r \xi_{[x]_l}+\xi_{[y]_l} \right)= \\
& \hspace{2cm} = \sum\limits_{(x,y) \in \mathcal V_{r,n} } \exp\left( -n \sum\limits_{l=1}^r \xi_{[x]_l}+\xi_{[y]_l} \right) \\
& \hspace{4cm}+\sum\limits_{(x,y) \in  (\mathcal{V}^{\rightarrow}_{r,n}\times\mathcal{V}^{\leftarrow}_{r,n}  )\setminus\mathcal{V}_{r,n}   } \exp\left( -n \sum\limits_{l=1}^r \xi_{[x]_l}+\xi_{[y]_l} \right)\,.
\eea \eeq
and claim that the term on the r.h.s. vanishes in probability. Indeed, by a simple computation involving the moment generating function of the exponential distribution,
\beq \bea
\E\left| \sum\limits_{(x,y) \in (\mathcal{V}^{\rightarrow}_{r,n}\times\mathcal{V}^{\leftarrow}_{r,n}  )\setminus\mathcal{V}_{r,n}}  \exp\left( -n \sum\limits_{l=1}^r \xi_{[x]_l}+\xi_{[y]_l} \right) \right| &=  \left| (\mathcal{V}^{\rightarrow}_{r,n}\times\mathcal{V}^{\leftarrow}_{r,n}  )\setminus\mathcal{V}_{r,n}\right| (n+1)^{-2r} \\
& \stackrel{\eqref{card_diff}}{\longrightarrow} 0, \quad n\to \infty.
\eea \eeq
It thus follows from Markov's inequality that the contribution of paths in $(\mathcal{V}^{\rightarrow}_{r,n}\times\mathcal{V}^{\leftarrow}_{r,n}  )\setminus\mathcal{V}_{r,n}$ is irrelevant for our purposes:  the weak limit when summing over $\mathcal{V}_{r,n}$, and that when summing over $\mathcal{V}^{\rightarrow}_{r,n}\times \mathcal{V}^{\leftarrow}_{r,n}$ coincide, provided one of them exists. On the other hand, the sum over the enlarged set of paths "decouples" into two independent identically distributed terms:
\beq \sum\limits_{(x,y) \in \mathcal{V}^{\rightarrow}_{r,n}\times\mathcal{V}^{\leftarrow}_{r,n} } \exp\left( -n \sum\limits_{l=1}^r \xi_{[x]_l}+\xi_{[y]_l} \right)=\sum\limits_{x \in \mathcal{V}^{\rightarrow}_{r,n} } \exp\left( -n \sum\limits_{l=1}^r \xi_{[x]_l}\right)\sum\limits_{y \in \mathcal{V}^{\leftarrow}_{r,n} } \exp\left( -n \sum\limits_{l=1}^r \xi_{[y]_l} \right).\eeq
The n-convergence will therefore follow as soon as we show that
\beq  Z_{r,n}\defi \sum\limits_{x \in \mathcal{V}^{\rightarrow}_{r,n} }\exp\left( -n \sum\limits_{l=1}^r \xi_{[x]_l}\right) \underset{n \to \infty}{\longrightarrow}\sum_{\pi \in \N^r}\exp\left(\sum_{l=1}^{r}-\eta_{\pi_1\pi_2...\pi_j}\right) \defi Z_r\eeq
holds weakly. This will be done by induction on $r$. The base-case $r=1$ is addressed in
\begin{lem} \label{init}
Consider $\eta \defi  \sum_{i \in \N} \de_{\eta_{i}} $ a \emph{PPP}($\mathbbm{1}_{\R^+}dx$) and independent standard exponentials $(\xi_{i})_{i\in \N}$. It then holds:
\beq\bea
\sum\limits_{i=1}^{n} \delta_{\xi_{i}n}  \underset{n \to \infty}{\longrightarrow} \eta \label{evt}
\eea \eeq
weakly. Furthermore, the following weak limit holds:
\beq\bea
\sum\limits_{i=1}^{n} \exp\left( - \xi_{i}n\right)  \underset{n \to \infty}{\longrightarrow} \sum\limits_{i\in \N} \exp\left( - \eta_{i}\right)\,. \label{functional}
\eea \eeq
\end{lem}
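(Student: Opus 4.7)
The strategy is to compute Laplace functionals and Laplace transforms directly; both limits reduce to the identity $(1-c_n/n)^n \to e^{-c}$ once the integrals appearing inside are handled by dominated convergence.

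For the point-process convergence \eqref{evt}, I would fix a continuous, compactly supported $f\colon \R^+\to\R^+$. By independence and the change of variable $y = nx$,
\[
\E\exp\Bigl(-\sum_{i=1}^n f(n\xi_i)\Bigr) = \Bigl(\E e^{-f(n\xi_1)}\Bigr)^n = \Bigl(1 - \frac{1}{n}\int_0^\infty(1-e^{-f(y)})\,e^{-y/n}\,dy\Bigr)^n.
\]
Since $1-e^{-f(y)}$ is bounded with compact support, dominated convergence gives $\int_0^\infty(1-e^{-f(y)})e^{-y/n}dy \to \int_0^\infty(1-e^{-f(y)})dy$, so the whole expression converges to $\exp\bigl(-\int_0^\infty(1-e^{-f(y)})dy\bigr)$, which is the Laplace functional of a PPP of intensity $\1_{\R^+}dx$. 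Weak convergence of point processes then follows from standard theory (see e.g.\ \cite{kallenberg}).

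For \eqref{functional}, the test function $y\mapsto e^{-y}$ is not compactly supported, so \eqref{evt} does not apply as a continuous mapping. I would instead compute the scalar Laplace transform directly: for $t\ge 0$ the analogous calculation gives
\[
\E\exp\Bigl(-t\sum_{i=1}^n e^{-n\xi_i}\Bigr) = \Bigl(1 - \frac{1}{n}\int_0^\infty (1-e^{-te^{-y}})\,e^{-y/n}\,dy\Bigr)^n.
\]
The integrand is bounded by $\min(te^{-y},1)$, hence integrable uniformly in $n$, so dominated convergence applies once more and the limit equals $\exp\bigl(-\int_0^\infty(1-e^{-te^{-y}})dy\bigr)$. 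By Campbell's formula, this is precisely $\E\exp\bigl(-t\sum_{i\in\N}e^{-\eta_i}\bigr)$; since Laplace transforms on $[0,\infty)$ determine the law of a nonnegative random variable, weak convergence follows.

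The only real subtlety --- the main obstacle worth naming --- is the non-compact support of $y\mapsto e^{-y}$ in \eqref{functional}, which prevents one from simply quoting \eqref{evt}. An alternative route would be a truncation argument: split the sum at a level $K$, apply \eqref{evt} together with the continuous mapping theorem to the bulk on $[0,K]$, and dominate the tail via the first-moment identity $\E\sum_{i=1}^n e^{-n\xi_i}\1_{\{n\xi_i > K\}} \to e^{-K}$ combined with Markov's inequality, before letting $K\to\infty$. The direct Laplace-transform route above is however shorter and more transparent.
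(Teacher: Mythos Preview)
Your proposal is correct and follows essentially the same route as the paper: for \eqref{functional} both you and the authors compute the scalar Laplace transform, factor by independence, change variables $y=nx$, and pass to the limit by dominated convergence to recover the Laplace functional of the PPP. The only difference is that for \eqref{evt} the paper simply invokes it as a classical extreme-value fact and omits the proof, whereas you supply one via the Laplace functional---which is a perfectly good (and arguably the cleanest) way to do it.
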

\begin{rem} \label{universality}
In virtue of Lemma \ref{init}, Theorem \ref{pf} holds for any choice of edge-weights falling in the universality class of the exponential distribution, i.e. for which \eqref{evt} holds.
\end{rem}

\begin{proof}[Proof of Lemma \ref{init}]
Claim \eqref{evt} is a classical result in extreme value theory. We omit the elementary proof.
As for the second claim: it is steadily checked (e.g. by Markov's inequality) that the sum  on the l.h.s. of \eqref{functional} is almost surely finite. In order to prove \eqref{functional} it thus suffices to
compute the Laplace transform of the two sums. For $t \in \R^{+}$, since the $\xi's$ are independent, we have:
\beq\bea \label{boh}
\E\exp - t \sum_{i=1}^{n} e^{- \xi_{i}n } = {\E\left( e^{te^{-\xi_{1}n}}\right)}^n & ={\left(1+\int\limits_{0}^{+\infty}e^{-x}(e^{te^{-xn}}-1)dx\right)}^n\\
&={\left(1+\frac{1}{n} \int\limits_{0}^{+\infty}e^{-u/n}(e^{te^{-u}}-1)du \right)}^n \,,
\eea \eeq
the second equality by change of variable. But $e^{-u/n}(e^{te^{-u}}-1)\leq (e^{te^{-u}}-1)$, which is integrable, hence  by dominated convergence we have that the r.h.s. of \eqref{boh} converges, as $n\uparrow \infty$, to the limit
\beq\bea
\exp\left(\int\limits_{0}^{+\infty} (e^{ -te^{-x}}-1)dx\right) =  \E \exp -t\sum\limits_{i\in \N} e^{- \eta_{i}},
\eea \eeq
where the last equality follows by a simple computation: \eqref{functional} is therefore settled.
\end{proof}

\noindent For the n-convergence, we will work with the Prohorov metric, which we recall is defined as follows: for $\mu, \nu \in \mathcal M_1(\R)$ two probability measures, the Prohorov distance is given by
\[
\text{d}_{\text{P}}\left(\mu, \nu \right) \defi \inf\left\{ \e> 0:  \mu(A) \leq \nu(A^\e) + \e,\; \forall A \subset \R \, \text{closed}\right\}\,,
\]
where $A^\e \defi \{x\in \R: \text{d}(A,x) \leq \e\}$ is the $\e$-neighborhood of the set $A$. It is a classical fact that the Prohorov distance metricizes weak convergence. We also recall the following implication, as it will be used at different occurences: for two r.v.s $X,Y$, slightly abusing notation, one has:
\beq \label{gabriel}
\PP(|X-Y|>\varepsilon)\leq \varepsilon \Rightarrow \text{d}_{\text{P}}(X,Y) \leq \varepsilon\,.
\eeq
In fact, $ \PP(|X-Y|>\varepsilon)\leq \varepsilon$ implies that for $A\subset \R$,
\beq \PP( X\in A) \leq \PP(X\in A, |X-Y|\leq \varepsilon) + \PP(|X-Y|>\varepsilon) \leq \PP(Y \in A^{\varepsilon}) + \varepsilon\,
\eeq
from which $\text{d}_{\text{P}}(X,Y) \leq \varepsilon$ follows, settling \eqref{gabriel}.\\

We now proceed to the induction step:  we thus assume that $Z_{r,n}$ converges weakly to $Z_{r}$ for some $r\in \N$ and show how to deduce that $Z_{r+1, n}$ converges weakly to  $Z_{r+1}$. First, we observe that by definition
\beq \bea \label{small_sum}
 Z_{r+1,n} & = \sum\limits_{i\leq n} \exp\left(-n \xi_{(\boldsymbol{0},e_i)}\right) \sum\limits_{x \in \mathcal{V}^{\rightarrow}_{r+1,n}: x_2 = e_i }\exp\left( -n \sum\limits_{l=2}^{r+1} \xi_{[x]_l}\right) \\
& = \sum\limits_{i\leq n} \exp\left(-n \xi_{(\boldsymbol{0},e_i)}\right) \times Z_{r, n}^{e_i}\,,
\eea \eeq
changing notation for the second sum to lighten exposition.

We claim that it suffices to consider small $\xi$-values in the first sum. Precisely, let $\varepsilon>0$, set $K_\varepsilon = -2\ln\varepsilon $, and restrict the first sum to those $\xi's$ such that $\xi_{(0, e_i)} \leq K_\vare/n$. We claim that this causes only an $\vare$-error in Prohorov distance, to wit
\beq  \label{dist_p}
\sup_{n,r} \text{d}_\text{P} \left( Z_{r+1, n}, \sum\limits_{i\leq n}\mathbbm{1}_{\{\xi_{(\boldsymbol{0},e_i)} \leq K_\varepsilon/n\}} e^{-n \xi_{(\boldsymbol{0},e_i)}} \times Z_{r, n}^{e_i} \right) \leq \vare.
\eeq
In fact,  for the contribution of large $\xi's$, it holds:
\beq \bea \label{error_p}
& \PP\left(\sum\limits_{i\leq n}\mathbbm{1}_{\{\xi_{(\boldsymbol{0},e_i)}>K_\varepsilon/n\}} e^{-n \xi_{(\boldsymbol{0},e_i)}} \times Z_{r, n}^{e_i} > \varepsilon\right) \\
& \hspace{3cm}  \leq \frac{1}{\varepsilon}\E\left[ \sum\limits_{i\leq n}\mathbbm{1}_{\{\xi_{(\boldsymbol{0},e_i)}> K_\varepsilon/n\}} e^{-n \xi_{(\boldsymbol{0},e_i)}}  \times Z_{r, n}^{e_i} \right] \\
& \hspace{3cm}  = \frac{n}{\vare} \E\left[ \mathbbm{1}_{\{\xi_{(\boldsymbol{0},e_i)}> K_\varepsilon/n\}} e^{-n \xi_{(\boldsymbol{0},e_i)}}  \right] \times \E\left[ Z_{r, n}^{e_i} \right] \,,
\eea \eeq
the first step by Markov inequality, and the second by independence.  Computing explicitly the above expectations yields that the r.h.s. of \eqref{error_p} is {\it at most}
\beq \bea \label{error_p_2}
\frac{n}{\varepsilon}   \int\limits_{K_\varepsilon/n}^{\infty} e^{-(n+1)x} dx \times   \frac{(n-1)!}{(n-r-1)!} (n+1)^{-r}
& \leq \frac{\exp -K_\varepsilon}{\varepsilon} = \varepsilon,
\eea \eeq
since $K_\vare= -2\ln\varepsilon$. This settles \eqref{dist_p}. 

Consider now the permutation $p$ of $\{1,..,n\}$ such that $(\xi_{p(i)})_{i\leq n}$ is increasing, and set $\hat{K}_\varepsilon \defi \lceil K_\varepsilon/\varepsilon \rceil$. We clearly have 
\beq
Z_{n, r+1} \geq \sum\limits_{i\leq \hat{K}_\varepsilon} e^{-n \xi_{p(i)}} Z_{r, n}^{e_{p(i)}} \,.
\eeq
for $\geq\hat{K}_\varepsilon$. On the other hand,
\beq\bea
& \PP\left( Z_{n, r+1} \geq  \sum\limits_{i\leq \hat{K}_\varepsilon} e^{-n \xi_{p(i)}} Z_{r, n}^{e_{p(i)}} + \varepsilon \right) \\
&  \hspace{1.5cm} \leq \PP\left( Z_{n, r+1} \geq  \varepsilon+ \sum\limits_{i\leq n}\mathbbm{1}_{\{\xi_{(\boldsymbol{0},e_i)} \leq K_\varepsilon/n\}} e^{-n \xi_{(\boldsymbol{0},e_i)}} \times Z_{r, n}^{e_i} \right)\\
& \hspace{3cm} + \PP\left( \sum\limits_{i\leq \hat{K}_\varepsilon} e^{-n \xi_{p(i)}} Z_{r, n}^{e_{p(i)}} \leq \sum\limits_{i\leq n}\mathbbm{1}_{\{\xi_{(\boldsymbol{0},e_i)} \leq K_\varepsilon/n\}} e^{-n \xi_{(\boldsymbol{0},e_i)}} \times Z_{r, n}^{e_i} \right)
\eea\eeq
While the first term is at most $\varepsilon$ by \eqref{error_p} and \eqref{error_p_2}, the second term equals
\beq \bea
& \PP\left( \#\{i\leq n: \xi_{(\boldsymbol{0},e_i)} \leq K_\varepsilon/n \} > \hat{K}_\varepsilon \right) \\
& \hspace{3cm} \leq n \PP(\xi_{(\boldsymbol{0},e_1)}\leq  K_\varepsilon/n) \big/ \hat{K}_\varepsilon  \leq K_\varepsilon \big/ \hat{K}_\varepsilon \leq \varepsilon\,,
\eea \eeq
the first estimate by Markov inequality and the second using $(1-e^{-x}) \leq x$.

All in all, in virtue of \eqref{gabriel}, the above considerations imply that
\beq \label{eq_n_lim1}
\sup_{n,r}\text{d}_\text{P}\left( Z_{n, r+1}, \sum\limits_{i\leq \hat{K}_\varepsilon} e^{-n \xi_{p(i)}} Z_{r, n}^{e_p(i)} \right) \leq 2 \vare \,.
\eeq
A fixed, finite number of paths therefore carries essentially all weight: we will now show that these paths are, with overwhelming probability, organised in a "tree-like fashion".  Towards this goal,  we go back to the original formulation
\beq \bea \label{eq_n_lim2}
& \sum\limits_{i\leq \hat{K}_\varepsilon} e^{-n \xi_{p(i)}} Z_{r,n}^{e_i} = \sum\limits_{i\leq \hat{K}_\varepsilon} e^{-n \xi_{p(i)}}  \sum\limits_{x \in \mathcal{V}^{\rightarrow}_{r+1,n}: x_2 = e_{p(i)} }\exp\left( -n \sum\limits_{l=2}^{r+1} \xi_{[x]_l}\right)\,.
\eea \eeq
Note that any directed path of length $r+1$ with first step $(\boldsymbol{0},e_i)$, can only share an edge with another path starting with $(\boldsymbol{0},e_j)$, $i\neq j$ if it goes in the direction $e_j$ at some point. By this observation for $i\neq j$ and $i,j \in \{1,..,n\}$
\beq
|\{x \in \mathcal{V}^{\rightarrow}_{r+1,n}: x_2 = e_i,  \exists x'\in  \mathcal{V}^{\rightarrow}_{r+1,n} \mbox{ s.t. } x'_2 = e_j \mbox{ and } x\cap x' \neq \emptyset \} | = O(n^{r-1})
\eeq
holds. Combining this fact with the observation
\beq
\E\exp\left( -n \sum\limits_{l=2}^{r+1} \xi_{[x]_l}\right) = (n+1)^{-r}
\eeq
we see that the total contribution of such paths  converges in probability to zero, by Markov inequality, and swapping these intersecting summands for copies of themselves that are independent of paths with different start edge does not change the weak limit. The weak limit of $\eqref{eq_n_lim2}$ therefore coincides with the weak limit of
\beq
\sum\limits_{i\leq \hat{K}_\varepsilon} \exp\left(-n \xi_{p(i)}\right) \sum\limits_{x \in \mathcal{V}^{\rightarrow}_{r,n-1} }\exp\left( -n \sum\limits_{l=1}^{r} \xi^{(p(i))}_{[x]_l}\right)
\eeq
where $\xi^{(p(i))}_{[x]_l} = \xi_{[x]_l}$ if $[x]_l$ cannot be part of a path starting with $e_{p(j)}$ for some $j\neq i$ with $j\leq  \hat{K}_\varepsilon$. On the other hand, the $\xi^{(p(i))}_{[x]_l}$'s are exponentially distributed and independent of each other for different $p(i)$ and or different $[x]_l$ as well as independent of all $(\xi_{e})_{e\in\E_n}$.  Finally, we realize that replacing
\beq
\exp\left( -n \sum\limits_{l=1}^{r} \xi^{(p(i))}_{[x]_l}\right) \mbox{\quad by \quad}\exp\left( -(n-1) \sum\limits_{l=1}^{r} \xi^{(p(i))}_{[x]_l}\right)
\eeq
causes, by the restriction argument \eqref{cond_exp_eq2}, an error which vanishes in probability. Collecting all changes and estimates, we have thus shown that the distribution of $Z_{r+1,n}$ is at most $2\varepsilon+o_n(1)$-Prohorov distance away from the weak limit of
\beq \label{eq_n_lim2}
\sum\limits_{i\leq \hat{K}_\varepsilon} \exp\left(-n \xi_{p(i)}\right) Z^{(i)}_{r,n-1},
\eeq
where ${Z}_{r,n-1}^{(i)},i\in \N$ are independent copies of ${Z}_{r,n-1}$. By assumption $Z_{r,n-1}$ converges weakly to $Z_r$ and by Lemma $\ref{init}$ the smallest finitely many $n\xi$'s converge weakly to the first that many points of a PPP($\mathbbm{1}_{\R^+} dx$). We conclude that the Prohorov distance of $Z_{r+1,n}$ and
\beq
\sum\limits_{i\leq \hat{K}_\varepsilon} \exp\left(- \hat{\eta_i} \right) Z^{(i)}_{r},
\eeq
is at most by an in $n$ vanishing sequence larger than $2\varepsilon$.  Checking using Markov inequality that the contibution of $i>\hat{K}_\varepsilon$ is vanishing in probability gives that
\beq
d_{P}\left(\mathcal{L} (Z_{r+1,n}), (\mathcal{L} Z_{r+1}) \right) \rightarrow 0
\eeq
has to hold as $n\rightarrow \infty$. This finishes the induction, and the proof of the n-convergence is thus settled. \\
${}$  \hfill $\square$

We move to the proof of the second claim of Proposition \ref{double_limit}, the r-convergence. As mentioned,
this will be done via a contraction argument on the space $\mathcal{P}_2$ of probability measures on $\R$ with  finite second moment. To this end, let $(\eta_i)_{i\in\N}$ be a PPP($\mathbbm{1}_{\R^+} dx$). Define
\beq \bea
T: \mathcal{P}_2 & \to \mathcal{P}_2, \\
 \mu & \mapsto \mathcal{L}\left(\sum\limits_{i\in\N} e^{-\eta_i} X_i\right),
\eea \eeq
where $(X_i)_{i\in \N}$ are independent and identically $\mu$-distributed, and independent of $\eta$.
Note that $T$ is well-defined, i.e., we have that $T\mu$ has a finite second moment for all $\mu\in \mathcal{P}_2$ by applying the triangle inequality, $\E[\sum\limits_{i\in\N}e^{-2\eta_i}]=1/2$ and independence. Moreover, since $\E[\sum_{i\in\N} e^{-\eta_i}]=1$ the map $T$ does not change the first moment. Hence, for the subset
$$\mathcal{P}_{2,1}:=\left\{\mu \in \mathcal{P}_{2}: \int x\; d\mu = 1\right\}$$ the restriction of $T$ to $\mathcal{P}_{2,1}$ maps to $\mathcal{P}_{2,1}$.  By construction, it holds that
\beq \label{fix}
\mathcal{L} (Z_{r+1}) = T\mathcal{L}(Z_r).
\eeq
We now endow $\mathcal{P}_2$ with the minimal $L_2$-distance $\ell_2$, also called Wasserstein distance of order $2$: for $\mu,\nu\in \mathcal{P}_{2}$ this is defined by
 $$\ell_2(\mu,\nu)=\inf\{\|V-W\|_2:\mathcal{L}(V)=\mu,\mathcal{L}(W)=\nu\},$$
 where the infimum is over all random variables $V,W$ on a joint probability space with the respective distributions. Convergence in $\ell_2$ implies weak convergence, $(\mathcal{P}_{2},\ell_2)$ and  $(\mathcal{P}_{2,1},\ell_2)$ are complete metric spaces. For these topological properties and the existence of optimal couplings used below see, e.g., Ambrosio, Gigli and Savar{\'e} \cite{ags} or Villani \cite{villani}. Within the present setting, in order to prove the r-convergence it suffices to prove that
\begin{itemize}
\item The restriction of $T$ to $\mathcal{P}_{2,1}$ is a strict $\ell_2$-contraction.
\item The standard exponential distribution is a fixed point of $T$ restricted to $\mathcal{P}_{2,1}$.
\end{itemize}
We remark that $T$ as a map on $\mathcal{P}_{2}$ has infinitely many fixed points and that our argument below also implies that these fixed points are exactly the exponential distributions with arbitrary parameter, their negatives, and the Dirac measure in $0$. Uniqueness of the fixed point on  $\mathcal{P}_{2,1}$ is immediate by Banach fixed point theorem and the strict contraction property. 

Contractivity goes as follows. For $\mu,\nu\in \mathcal{P}_{2,1}$, let $(X_i,Y_i)_{i\in\N}$ be a sequence of independent {\it optimal} $\ell_2$-couplings, which are also independent of $\eta$; optimal $\ell_2$-couplings means here that the pair $(X_i,Y_i)$ has marginal distributions $\mu$ and $\nu$, and that it attains the infimum  in the definition of $\ell_2$. It then holds:
\beq
\ell_2(T\mu,T\nu)^2 \leq \E\left[ \left( \sum\limits_{i\in\N} e^{-\eta_i} (X_i-Y_i)\right)^2\right].
\eeq
Remark that the off-diagonal terms on the r.h.s. above vanish, since $X_i-Y_i$ has zero expectation: using this, we thus obtain
\beq
\ell_2(T\mu,T\nu)^2 \leq  \E\left[ \sum_{i} e^{-2\eta_i}\right]\E\left[ (X_1-Y_1)^2\right] =\frac{1}{2} \ell_2\left(\mu,\nu\right)^2,
\eeq
the last step by optimality of the coupling. This implies that the restriction of the map $T$ to $\mathcal{P}_{2,1}$ is an $\ell_2$-contraction. 

It thus remains to prove that the standard exponential distribution is the fixed point of $T$ in $\mathcal{P}_{2,1}$. This can be checked via  Laplace transformation: consider independent standard exponentials $X_1, X_2,...$ which are also independent of $\eta$. For $t> 0$,
\beq \bea
\E\left[\exp\left(-t \sum\limits_{i=1}^\infty e^{-\eta_i} X_i\right)\right] & = \E\left[ \exp\left(-\sum\limits_{i=1}^\infty \ln\left( 1+t e^{-\eta_i} \right)\right)\right] \\
& = \exp\left(\int\limits_{0}^{\infty} \frac{1}{1+t e^{-x}} -1 dx\right) = \frac{1}{1+t},
\eea \eeq
which is the Laplace transform of a standard exponential. This implies {\it ii)}. The r-convergence therefore
immediately follows from Banach fixed point theorem. \\
${}$ \hfill $\square$

\subsection{Vanishing of the Chen-Stein term.} \label{vanishing_cs} The goal here is to prove \eqref{cs_vanishes},
namely that
\beq
\lim_{r\to \infty} \lim_{n\to \infty} \; \text{CS}(r,n) = 0\,.
\eeq
This requires some additional notation. Let
\[ \bea
\Sigma_{n,r} \defi & \Big\{ (\pi,\pi') \in \Sigma_n\times \Sigma_n: \pi, \pi' \; \text{have at least a common edge}\;  e, \\
& \hspace{3cm} e = (u,v) \in E, \{d(u,\boldsymbol{0}),d(v,\boldsymbol{0})\}\in \left[r, n-r\right) \Big\}\,.
\eea \]
For paths $(\pi,\pi') \in \Sigma_n\times \Sigma_n$, we denote by $\pi\wedge \pi'$ their {\it overlap}, i.e. the number of edges shared by both paths. Working out the conditional Chen-Stein bound \eqref{ccs}, we get
\beq \bea \label{letsgo}
\text{CS}(r,n) = & \E d_{TV,\mathcal{F}_{r,n}} \left( \Xi_n(A), \Poi\left(\la_n(A) \right) \right)\\
& \leq \; \sum_{\pi\in \Sigma_n} \E\left[ \E[I_\pi(A)|\mathcal{F}_{r,n}]^2 \right] \\
& \qquad \qquad + \sum_\star \E \left[ \E[I_\pi(A)|\mathcal{F}_{r,n}] \E[I_{\pi'}(A)|\mathcal{F}_{r,n}] \right] \\
& \qquad + \sum_\star \E\left[ \E[I_\pi(A) I_{\pi'}(A)|\mathcal{F}_{r,n}] \right]\,,
\eea \eeq
where $\sum_\star$ denotes summation over all $(\pi,\pi')\in \Sigma_{n,r} : 1\leq\pi\wedge \pi'\leq n-2$. We will prove that all three terms on the r.h.s. of \eqref{letsgo} vanish in the limit $n\to \infty$ first, and $r\to \infty$ next. As the proof is long and technical, we formulate the statements in the form of three Lemmata.

\begin{lem} \label{uno}
\[
\lim_{r\to \infty }\lim_{n \to \infty} \sum_{\pi\in \Sigma_n} \E\left[ \E[I_\pi(A)|\mathcal{F}_{r,n}]^2 \right] = 0\,.
\]
\end{lem}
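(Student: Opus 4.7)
\textbf{Proof plan for Lemma \ref{uno}.} The plan is to bypass the fine structure of $\E[I_\pi(A)\mid\mathcal{F}_{r,n}]$ altogether by establishing a deterministic, uniform-in-$\pi$ upper bound on it that decays super-polynomially in $n$, and then collapsing the sum of squares onto the Poisson parameter $\la_{r,n}(A)$, whose expectation is already under control.

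First I would decompose the weight of each $\pi\in \Sigma_n$ as $X_\pi = S_\pi+Y_\pi$, where $S_\pi$ is the sum of the weights on the $2r$ edges at distance at most $r$ from $\boldsymbol{0}$ or $\boldsymbol{1}$, while $Y_\pi$ is the sum over the remaining $n-2r$ "middle" edges. Then $S_\pi$ is $\mathcal{F}_{r,n}$-measurable and non-negative, whereas $Y_\pi$ is independent of $\mathcal{F}_{r,n}$ and distributed as a sum $X_{n-2r}$ of $n-2r$ i.i.d.\ standard exponentials. Taking $A=(-\infty,a]$, this gives the pointwise bound
\[
\E[I_\pi(A)\mid\mathcal{F}_{r,n}] \;=\; \PP\bigl(Y_\pi \leq 1+\tfrac{a}{n}-S_\pi \,\big|\, \mathcal{F}_{r,n}\bigr) \;\leq\; \PP\bigl(X_{n-2r}\leq 1+\tfrac{a}{n}\bigr),
\]
since $S_\pi\geq 0$ a.s. By Lemma \ref{tail}, applied with $x=1+a/n$ and $k=n-2r$, the right-hand side equals $(1+o_n(1))\,e^{-1+a}/(n-2r)!$ as $n\to \infty$ with $r,a$ fixed; in particular there exists $C_a>0$ and $n_0=n_0(a)$ such that the uniform bound
$\max_{\pi\in\Sigma_n} \E[I_\pi(A)\mid\mathcal{F}_{r,n}] \leq C_a/(n-2r)!$
holds deterministically for all $n\geq n_0$ and all $r$.

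The rest is the elementary inequality $\sum_\pi a_\pi^2 \leq (\max_\pi a_\pi)\sum_\pi a_\pi$ for $a_\pi\geq 0$. Applied to $a_\pi=\E[I_\pi(A)\mid\mathcal{F}_{r,n}]$ and noting that $\sum_{\pi}a_\pi = \la_{r,n}(A)$ by the very definition of the Poisson parameter, we obtain
\[
\sum_{\pi\in\Sigma_n} \E[I_\pi(A)\mid\mathcal{F}_{r,n}]^2 \;\leq\; \frac{C_a}{(n-2r)!}\,\la_{r,n}(A).
\]
Taking expectation and using the tower property together with symmetry, $\E\la_{r,n}(A)=\E\Xi_n(A)$, which converges to the finite number $\int_A e^{x-1}dx$ by the intensity computation \eqref{intens}, one concludes that for every fixed $r$ the expectation on the left vanishes as $n\to\infty$; hence the iterated limit is zero.

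There is essentially no obstacle here: the argument avoids ever having to track $S_\pi$ in detail, absorbing all path-dependence into $\la_{r,n}(A)$, and it does not need either Proposition \ref{double_limit} or the weak convergence of $\la_{r,n}(A)$ --- only the bounded first moment $\E\Xi_n(A)=O(1)$. The only mild point to verify is that the constant $C_a$ in Lemma \ref{tail}'s tail estimate can be chosen independently of $r$, which is immediate since $(1+a/n)^{n-2r}\to e^a$ and the error term $K(1+a/n,n-2r)$ from Lemma \ref{tail} is uniformly $o(1)$.
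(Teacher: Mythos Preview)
Your argument is correct and takes a genuinely different, more economical route than the paper's.

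The paper proceeds by direct computation: using symmetry to reduce to a single path $\pi^*$, it writes out $\E\bigl[\E[I_{\pi^*}(A)\mid\mathcal{F}_{r,n}]^2\bigr]$ as an explicit integral against the Gamma$(2r,1)$ density of the $\mathcal{F}_{r,n}$-measurable part, applies Lemma~\ref{tail} inside the integral, and arrives at the bound $n!\,e^{2a}\big/\bigl((n-2r)!^2(2r-1)!\bigr)\cdot(1+o_n(1))$, which vanishes as $n\to\infty$ for fixed $r$. You instead bypass the integral entirely: dropping the nonnegative boundary contribution $S_\pi$ gives a \emph{deterministic} pointwise bound $\E[I_\pi(A)\mid\mathcal{F}_{r,n}]\le C_a/(n-2r)!$, and then the elementary factoring $\sum_\pi a_\pi^2\le(\max_\pi a_\pi)\sum_\pi a_\pi$ reduces the problem to controlling $\E\,\la_{r,n}(A)=\E\,\Xi_n(A)$, which is already known to be $O(1)$ from the intensity computation~\eqref{intens}. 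Your approach thus recycles~\eqref{intens} rather than redoing a Gamma integral, and yields the slightly sharper overall bound $O\bigl(1/(n-2r)!\bigr)$. The paper's computation, on the other hand, is entirely self-contained and makes the dependence on $r$ explicit through the $(2r-1)!$ factor. One minor quibble: your claim that $K(1+a/n,n-2r)$ is ``uniformly $o(1)$'' is not quite right if $r$ is allowed to approach $n/2$, but this is irrelevant---all you need (and all the iterated limit requires) is that $1+K$ is bounded by an absolute constant for $n\ge n_0(a)$, which follows directly from the estimate in Lemma~\ref{tail}.
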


\begin{lem} \label{due}
\[
\lim_{r\to \infty }\lim_{n \to \infty} \sum_\star \E \left[ \E[I_\pi(A)|\mathcal{F}_{r,n}] \E[I_{\pi'}(A)|\mathcal{F}_{r,n}] \right] = 0\,.
\]
\end{lem}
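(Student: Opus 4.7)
\emph{Plan.} The driving observation is that $P_\pi := \E[I_\pi(A)|\mathcal{F}_{r,n}]$ depends on $\pi$ only through the $\mathcal{F}_{r,n}$--measurable quantity $Y_\pi := \sum_{i=1}^r \xi_{[\pi]_i} + \sum_{i=n-r+1}^n \xi_{[\pi]_i}$, hence only through the start/end pair $(x,y)\in \mathcal{V}_{r,n}$ identifying the first $r$ and last $r$ edges of $\pi$. Writing $P_\pi = P_{(x,y)}$ and decomposing each path uniquely as $\pi = (x,m,y)$ with $m\in\Sigma_{x,y}$, the starred sum factors as
\[
\sum_\star P_\pi P_{\pi'} = \sum_{(x,y),(x',y')\in \mathcal{V}_{r,n}} P_{(x,y)} P_{(x',y')} \cdot M(x,y;x',y'),
\]
where $M(x,y;x',y') := |\{(m,m') \in \Sigma_{x,y}\times\Sigma_{x',y'} : m\cap m' \ne \emptyset\}|$ counts pairs of middle paths sharing at least one edge. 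Since the unrestricted double sum evaluates precisely to $\la_{r,n}(A)^2 = ((n-2r)!)^2 \sum_{(x,y),(x',y')} P_{(x,y)} P_{(x',y')}$, we obtain the clean comparison
\[
\sum_\star P_\pi P_{\pi'} \;\le\; \Bigl(\sup_{(x,y),(x',y')} \frac{M(x,y;x',y')}{((n-2r)!)^2}\Bigr) \cdot \la_{r,n}(A)^2,
\]
reducing the lemma to two independent issues: a uniform combinatorial bound on the sharing fraction, and a uniform second-moment bound on $\la_{r,n}(A)$.

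\emph{Combinatorial estimate.} I would show $\sup_{(x,y),(x',y')} M(x,y;x',y')/((n-2r)!)^2 \le C_r/(n-2r)$ by union-bounding over the potential shared middle edge $(u,v)$. An admissible shared vertex $u$ must contain $x_{r+1} \cup x'_{r+1}$ and lie in $y_1\cap y'_1$; for each such $u$ at level $k$, the number of $m\in\Sigma_{x,y}$ traversing $(u,v)$ at the correct step is $(k-r)!(n-r-k-1)!$, and similarly for $m'$. The constraints loosen --- and the count is maximized --- on the diagonal $x_{r+1}=x'_{r+1}$, $y_1=y'_1$, where $M/((n-2r)!)^2$ reduces to the probability that two uniform random permutations of $n-2r$ symbols share an edge in the hypercube sense, namely $\sum_{k=0}^{n-2r-1} k!(n-2r-1-k)!/(n-2r)!$; this sum is dominated by its two boundary terms, each of order $1/(n-2r)$, yielding the claim.

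\emph{Second moment and conclusion.} Using Lemma \ref{tail} as in the derivation of \eqref{cond_exp_eq2}, one obtains the uniform upper bound $P_\pi \le \frac{C e^{a-1}}{(n-2r)!}\, e^{-(n-2r-1)Y_\pi}$. Taking expectations and exploiting independence of the edge-weights, the pairs $(\pi,\pi')$ may then be partitioned according to the lengths $(s_L,s_R)$ of their shared start--prefix and end--suffix --- which, by the hypercube constraint, are indeed forced to be a prefix and a suffix, respectively. The number of pairs in class $(s_L,s_R)$ scales as $n^{4r-s_L-s_R}$, while $\E[e^{-(n-2r-1)(X_{x,y}+X_{x',y'})}]$ scales as $2^{-(s_L+s_R)} n^{-(4r-s_L-s_R)}$, so each class contributes an $n$-independent $O(2^{-(s_L+s_R)})$ term; summing over $s_L,s_R\in\{0,\ldots,r\}$ gives $\E[\la_{r,n}(A)^2] \le C\, e^{2(a-1)}$ uniformly in $n$ and $r$. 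Combining the two steps,
\[
\E\Bigl[\sum_\star P_\pi P_{\pi'}\Bigr] \;\le\; \frac{C_r}{n-2r}\,\E[\la_{r,n}(A)^2] \;\underset{n\to\infty}{\longrightarrow}\; 0,
\]
which settles the lemma (the subsequent $r\to\infty$ limit is trivially preserved).

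\emph{Main obstacle.} The subtlest ingredient is the uniformity in $(x,y),(x',y')$ of the combinatorial bound on $M$: one must verify that off-diagonal configurations do not accidentally allow more middle-edge coincidences than the diagonal case. The argument rests on the fact that, when $(x,y)\ne(x',y')$, any admissible shared vertex $u$ is forced to contain the strictly larger set $x_{r+1}\cup x'_{r+1}$ and lie in the strictly smaller $y_1\cap y'_1$, which can only reduce the count; making this quantitative requires a short case analysis on $|x_{r+1}\cap x'_{r+1}|$ and $|y_1\cap y'_1|$ but introduces no new conceptual difficulty beyond that already in play on the diagonal.
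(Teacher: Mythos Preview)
Your approach is correct and is genuinely different from the paper's. (Note, incidentally, that in the paper the proofs of Lemmata~\ref{due} and~\ref{tre} carry swapped labels; the argument relevant for comparison is the one headed ``Proof of Lemma~\ref{tre}''.)

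The paper proceeds \emph{unconditionally}: it fixes a reference path $\pi^\ast$, splits the sum according to whether the total overlap $\pi^\ast\wedge\pi'$ is at most $2r$ or exceeds $2r$, and in each regime invokes the Fill--Pemantle path-counting bounds of Proposition~\ref{path_counting} (combined with a decoupling trick for small overlap, and Cauchy--Schwarz for large overlap). Your argument instead exploits the structural fact that $P_\pi=\E[I_\pi\mid\F_{r,n}]$ is constant on each fibre $\Sigma_{x,y}$, which yields the clean inequality
\[
\sum_\star P_\pi P_{\pi'}\ \le\ \Bigl(\sup_{(x,y),(x',y')}\frac{M(x,y;x',y')}{((n-2r)!)^2}\Bigr)\,\la_{r,n}(A)^2.
\]
The supremum is then an elementary estimate on oriented paths of length $N=n-2r$ (no Fill--Pemantle needed), and the remaining input is a second-moment bound on $\la_{r,n}(A)$. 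This is conceptually cleaner; the paper's route, in exchange, recycles estimates already set up for Lemma~\ref{tre}. For the off-diagonal bound on $M$ you can avoid any case analysis: since the per-edge counts $(k-r)!(n-r-k-1)!$ are the same for $m$ and $m'$, $\sum_e N_e N'_e\le\tfrac12\sum_e(N_e^2+(N'_e)^2)$ reduces immediately to the diagonal.

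Two points need tightening. First, your assertion that shared boundary edges are ``forced to be a prefix and a suffix'' is false: for $r\ge 3$ the paths $x=(1,2,3,\dots)$ and $x'=(2,1,3,\dots)$ share the third edge but neither of the first two. This does not harm the strategy --- simply partition by the \emph{number} $(s_L,s_R)$ of shared boundary edges; your moment formula $\E[e^{-c(X_{x,y}+X_{x',y'})}]\asymp 2^{-(s_L+s_R)}n^{-(4r-s_L-s_R)}$ depends only on that number anyway, and the pair count at fixed $(s_L,s_R)$ is still $O_r(n^{4r-s_L-s_R})$. Second, this direct count produces an $r$-dependent constant, so your claim of a bound on $\E[\la_{r,n}(A)^2]$ \emph{uniform in $r$} is not established by the sketch as written. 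A one-line repair is conditional Jensen, $\E[\la_{r,n}(A)^2]\le\E[\Xi_n(A)^2]$, the right-hand side being $r$-free and bounded by a standard second-moment computation; alternatively, since your combinatorial factor already vanishes as $n\to\infty$ for each fixed $r$, an $r$-dependent bound on $\E[\la^2]$ is in fact all the lemma requires.
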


\begin{lem} \label{tre}
\[
\lim_{r\to \infty }\lim_{n \to \infty}  \sum_\star \E\left[ \E[I_\pi(A) I_{\pi'}(A)|\mathcal{F}_{r,n}] \right] = 0\,.
\]
\end{lem}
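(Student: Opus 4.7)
The plan is to remove the conditional expectation by the tower property, classify pairs by edge-overlap $k := \pi \wedge \pi'$, and sum a joint tail estimate against a combinatorial count that exploits the middle-region constraint. By the tower property,
\[
\E\bigl[\E[I_\pi(A) I_{\pi'}(A)\mid \F_{r,n}]\bigr] = \PP\bigl(X_\pi \leq 1+a/n,\, X_{\pi'} \leq 1+a/n\bigr),
\]
so the claim reduces to showing $\sum_\star \PP(X_\pi \leq 1+a/n,\, X_{\pi'} \leq 1+a/n) \to 0$ in the double limit. For a fixed pair with overlap $k$, I would decompose $X_\pi = S + Y$ and $X_{\pi'} = S + Y'$, where $S$ sums the $k$ shared edge-weights and $Y, Y'$ are independent sums of $n-k$ exponentials on the disjoint ``lobes'' of $\pi, \pi'$. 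Conditioning on $S$, applying Lemma \ref{tail} to $Y$ and $Y'$, and integrating against the Gamma density of $S$ via the Beta identity $\int_0^x s^{k-1}(x-s)^{2(n-k)}\, \dd s = x^{2n-k}(k-1)!(2(n-k))!/(2n-k)!$, one obtains, for $x = 1+a/n$,
\[
\PP(X_\pi \leq x,\, X_{\pi'} \leq x) \;\leq\; C_a\, \frac{(2(n-k))!}{((n-k)!)^2\,(2n-k)!} \;=:\; p_k,
\]
which by Stirling has order $4^{n-k}/[\sqrt{n-k}\,(2n-k)!]$.

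It then remains to count the number $N_k$ of pairs $(\pi, \pi') \in \Sigma_{n,r}$ with overlap exactly $k$, and to show $\sum_{k=1}^{n-2} N_k\, p_k \to 0$. My approach is to fix $\pi$ and classify $\pi'$ by its vertex-agreement set $\{0 = t_0 < t_1 < \ldots < t_m = n\}$ with gaps $g_j := t_j - t_{j-1}$: the edge-overlap equals the number of unit gaps, while within each lobe of size $g_j \geq 2$ the path $\pi'$ chooses one of at most $g_j!$ permutations of the $g_j$ coordinates used by $\pi$ there. The constraint $(\pi, \pi') \in \Sigma_{n,r}$ forces at least one unit gap to be anchored at height $i \in [r, n-r)$. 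Fixing such a height $i$ bounds the number of $\pi'$ agreeing with $\pi$ at edge $i$ by $i!(n-i-1)!$, which is maximised at the boundary of $[r, n-r)$, giving a schematic estimate $N_k \leq n! \cdot n \cdot r!(n-r-1)! \cdot (\text{composition factor})$. Matching this against $p_k$ using $n!/(2n-k)! \lesssim 1/[n^{n-k}(n-k)!]$ (Stirling) yields $N_k\, p_k \lesssim r!/n^{r-O(1)}$, uniform in $k$ after summation, which vanishes as $n \to \infty$ for fixed, sufficiently large $r$; the $r \to \infty$ limit is then immediate.

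The main obstacle I anticipate is the combinatorial bookkeeping of the lobe structure in the regime $k \asymp n$, where both the number of gap-structures and the product $\prod g_j!$ rival $n!$, leaving narrow room before $p_k$ takes over. The cleanest route is likely to overcount by dropping the non-identity requirement on permutations within non-unit lobes (costing at most a constant factor per lobe) and to handle the resulting sum over compositions of $n$ by a generating-function argument, then to match term by term against the factorial decay in $p_k$. For $k$ close to $n-2$ (near-coincident paths) the estimate becomes transparent, since both $N_k$ and $p_k$ are polynomial in $n$ and decay is direct; the bulk of the work lies in the intermediate regime.
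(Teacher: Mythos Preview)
Your reduction via the tower property and the joint tail bound
\[
\PP\bigl(X_\pi \leq x,\ X_{\pi'} \leq x\bigr) \leq C_a\,\frac{(2(n-k))!}{((n-k)!)^2\,(2n-k)!}
\]
are both correct and coincide exactly with what the paper does (see \eqref{blaaa}--\eqref{est1}). The gap is entirely in the combinatorics.

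Your counting argument rests on the middle-edge constraint: a pair in $\Sigma_{n,r}$ must share an edge at some height $i\in[r,n-r)$, and fixing such an edge bounds the number of $\pi'$ by $i!(n-i-1)!\leq r!(n-r-1)!$. This is fine as an inequality, but it is far too weak to close the argument. For $k>2r$ any pair with overlap $k$ automatically has a shared edge in the middle region by pigeonhole, so $f(n,k,r)=f(n,k)$ and the constraint buys nothing; in that range your bound $n\cdot r!(n-r-1)!$ exceeds the true count $f(n,k)$ by a factor that is super-exponential in $n$, and $p_k$ does not absorb it. Concretely, for $k=n-2$ your estimate gives $N_k p_k$ of order $r!(n-r-1)!/n$, which diverges.

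More fundamentally, your conclusion that the sum vanishes as $n\to\infty$ for fixed $r$ cannot be right. The paper's analysis shows that the contribution from overlaps $k\in[r,C\ln n]$ is, after matching $f(n,k)\leq(1+o(1))(k+1)(n-k)!$ against $p_k$, of order $\sum_{k\geq r}(k+1)(3/4)^k$, a positive constant independent of $n$. The $r\to\infty$ limit is therefore genuinely needed, and any argument claiming a bound of the shape $r!/n^{r-O(1)}$ uniformly in $k$ is proving too much.

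What the paper actually does (in the argument labeled ``Proof of Lemma \ref{due}'', which by the tower property is precisely the computation you need here) is to rewrite $n!\,f(n,k,r)\,p_k$ as $\frac{f(n,k,r)}{(n-k)!}\cdot g(k/n)^n$ for an explicit $g$, and then split into three regimes in $k$. For $k<r$ the middle-edge bound \eqref{claiming} is used, exactly as you propose; for $r\leq k\leq C\ln n$ one drops the $r$-constraint and invokes the Fill--Pemantle estimate $f(n,k)\leq(1+o(1))(k+1)(n-k)!$ together with $g(\gamma)\leq(3/4)^\gamma$; for larger $k$ the sharper Fill--Pemantle bounds \eqref{path_counting_ii}--\eqref{path_counting_iii} are needed. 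The generating-function sketch in your last paragraph does not substitute for these path-counting results, which are the nontrivial combinatorial input.
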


The first contribution is easily taken care of:

\begin{proof}[Proof of Lemma \ref{uno}]
By symmetry we have that
\beq\bea \label{letsgo_1}
\sum_{\pi\in \Sigma_n}\E[\E[I_\pi(A)|\mathcal{F}_{r,n}]^2] = n!\E[\E[I_{\pi^{*}}(A)|\mathcal{F}_{r,n}]^2]\,,
\eea\eeq
where $\pi^{*} \in \Sigma_n$ is arbitrary. It thus follows from the tail-estimates of Lemma \ref{tail}  that
\beq\bea \label{letsgo_2}
\eqref{letsgo_1} & =n!\int_{0}^{1+\frac{a}{n}} {\left(1+K(1+\frac{a}{n}-x,n-2r) \right)}^2\frac{e^{-2(1+\frac{a}{n})+x}{(1+\frac{a}{n}-x)}^{2n-4r}x^{2r-1}}{{(n-2r)!}^2(2r-1)!}dx\\
& \leq n!\int_{0}^{1+\frac{a}{n}} {\left(1+e^{(1+\frac{a}{n})}\frac{(1+\frac{a}{n})}{n-2r} \right)}^2\frac{e^{-(1+\frac{a}{n})}{(1+\frac{a}{n})}^{2n-4r}{(1+\frac{a}{n})}^{2r-1}}{{(n-2r)!}^2(2r-1)!}dx\\
& =   \frac{n!e^{2a}}{{(n-2r)!}^2 (2r-1)!} (1+o_n(1))\,. \\
\eea\eeq
Since the r.h.s. of \eqref{letsgo_2} is vanishing in the large $n$-limit, the proof of Lemma \ref{uno} is concluded.
\end{proof}

Lemma \ref{due} and \ref{tre} require more work. In particular, we will make heavy use of the following combinatorial estimates, which have been established by Fill and Pemantle \cite{Fill_Pemantle} (see Lemma 2.3, 2.4 and 2.5 p. 598):

\begin{prop}[Path counting] \label{path_counting} Let $\pi'$ be any reference path on the $n$-dim hypercube connecting $\boldsymbol{0}$ and $\boldsymbol 1$. Denote by $f(n, k)$ the number of paths $\pi$ that share precisely $k$ edges ($k\geq1$) with $\pi'$. Finally, shorten $\mathfrak{n_{e}} \defi n-5e(n+3)^{2/3}$.
\begin{itemize}
\item For any  $K(n) = o(n)$ as $n \to \infty$,
\beq \label{path_counting_i}
f(n,k) \leq (1+o(1))(k+1)(n-k)! \,
\eeq
uniformly in $k$ for $k\leq K(n).$
\item Suppose $k\leq \mathfrak{n_{e}}$. Then, for $n$ large enough,
\beq \label{path_counting_ii}
f(n,k)\leq n^6(n-k)! \,.
\eeq
\item Suppose $k \geq \mathfrak{n_{e}}$. Then, for $n$ large enough,
\beq \label{path_counting_iii}
f(n,k)\leq {(2n^{\frac{7}{8}})}^{n-k} (n-k+1) \,.
\eeq
\end{itemize}
\end{prop}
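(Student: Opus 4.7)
The plan is to identify each path with a permutation and analyze $f(n,k)$ through the indecomposable block decomposition of permutations. Each $\pi\in\Sigma_n$ corresponds bijectively to a $\sigma\in S_n$ via $\sigma(j) = $ the coordinate flipped at step $j$ of $\pi$, and by the hypercube's vertex-transitive symmetry we may assume WLOG that $\pi' = \mathrm{id}$. A direct check shows the $j$-th edge of $\pi'$ lies in $\pi$ iff $\sigma$ setwise preserves both $\{1,\ldots,j-1\}$ and $\{1,\ldots,j\}$; thus, defining $T(\sigma) := \{j \in \{0,\ldots,n\} : \sigma(\{1,\ldots,j\}) = \{1,\ldots,j\}\}$, the number of shared edges equals $\#\{j \geq 1 : j-1, j \in T(\sigma)\}$.

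Writing $T(\sigma) = \{0 = t_0 < t_1 < \cdots < t_m = n\}$, the intervals $(t_{i-1}, t_i]$ of sizes $d_i$ are the maximal blocks on which $\sigma$ acts as an \emph{indecomposable} permutation (preserving no proper prefix). Size-$1$ blocks correspond exactly to shared edges, so $k = \#\{i : d_i = 1\}$. Letting $I_d$ denote the number of indecomposable permutations of $\{1,\ldots,d\}$ (so $I_1 = 1$, $I_2 = 1$, $I_3 = 3$, $I_4 = 13,\ldots$), this yields the identity
\[
f(n,k) \;=\; \sum_{m' \geq 0} \binom{m'+k}{k} \sum_{\substack{d_1, \ldots, d_{m'} \geq 2 \\ d_1 + \cdots + d_{m'} = n-k}} \prod_{i=1}^{m'} I_{d_i},
\]
where the binomial counts the interleavings of the $k$ trivial blocks among the $m'$ nontrivial ones.

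The three bounds then reduce to estimates on this sum using the classical asymptotic $I_d / d! \to 1$ and the trivial inequality $I_d \leq d!$. For bound (i), with $k = o(n)$, the $m' = 1$ term dominates and contributes $(k+1) I_{n-k} \leq (1+o(1))(k+1)(n-k)!$; terms with $m' \geq 2$ are smaller by a factor arising from the ratio of binomial coefficients times $\sum_{d_1+d_2 = n-k} I_{d_1}I_{d_2}/I_{n-k} = O(1/(n-k))$. For bound (ii), where $n - k \geq 5e(n+3)^{2/3}$, the crude majorization $I_{d_i} \leq d_i!$ combined with $\prod d_i! \leq (n-k)!$ and a polynomial bound on $\binom{m'+k}{k}$ (valid precisely in this regime) gives the $n^6 (n-k)!$ estimate.

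The main obstacle is bound (iii), where $k \geq \mathfrak{n_{e}}$ so $n - k = O(n^{2/3})$ is small but $\binom{m'+k}{k}$ can be astronomically large: the naive estimate $\prod I_{d_i} \leq (n-k)!$ is far too loose when there are many small blocks (each contributing only $I_2 = 1$, $I_3 = 3,\ldots$), and one must instead carefully balance the interleaving freedom against the scarcity of indecomposable permutations of small size. The refined argument uses sharp estimates on compositions of $n-k$ into parts of size $\geq 2$ together with the growth of $I_d$ for small $d$, with the factor $(n-k+1)$ corresponding to the allowable range of $m'$, the number of non-trivial blocks. Extracting the particular exponent $7/8$ in $(2n^{7/8})^{n-k}$ appears to require a finer per-block accounting that I would expect to be the most delicate technical step.
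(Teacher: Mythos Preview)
The paper does not actually prove this proposition: it is quoted verbatim from Fill and Pemantle \cite{Fill_Pemantle} (their Lemmas~2.3--2.5), and the present paper only \emph{uses} the bounds. So there is no ``paper's own proof'' to compare against beyond the original reference.

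That said, your framework is exactly the one Fill and Pemantle employ: the bijection $\pi\leftrightarrow\sigma\in S_n$, the identification of shared edges with size-$1$ blocks in the prefix decomposition of $\sigma$, and the resulting expression
\[
f(n,k)=\sum_{m'\geq 0}\binom{m'+k}{k}\sum_{\substack{d_1,\ldots,d_{m'}\geq 2\\ \sum d_i=n-k}}\prod_{i}I_{d_i}
\]
are all correct and match their setup. Your sketch of (i) is also sound: the $m'=1$ term gives $(k+1)I_{n-k}=(1+o(1))(k+1)(n-k)!$, and the $m'\geq 2$ tail is controlled by the convolution estimate you indicate.

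There is, however, a genuine gap in your argument for (ii). You write that ``a polynomial bound on $\binom{m'+k}{k}$ (valid precisely in this regime)'' combined with $\prod d_i!\leq (n-k)!$ yields $n^6(n-k)!$. But $\binom{m'+k}{m'}$ is \emph{not} polynomially bounded uniformly over $m'\leq (n-k)/2$ in the regime $k\leq \mathfrak{n_e}$: at the boundary $n-k\approx 5e\,n^{2/3}$ and $m'\approx (n-k)/2$, one has $\binom{m'+k}{m'}\approx\bigl(e(n+k)/(n-k)\bigr)^{(n-k)/2}$, which grows like $\exp(c\,n^{2/3}\log n)$, not like $n^6$. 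The crude majorization $\prod I_{d_i}\leq (n-k)!$ therefore discards exactly the compensating smallness needed when $m'$ is large (many small blocks, each contributing only $I_2=1$, $I_3=3,\ldots$). The Fill--Pemantle argument handles (ii) and (iii) together by a more delicate per-block estimate that trades the interleaving binomial against $\prod I_{d_i}$ simultaneously; your outline correctly identifies this tension for (iii) but does not resolve it, and the same tension already invalidates your sketch of (ii). In short: right decomposition, right diagnosis of where the difficulty lies, but the actual estimates for (ii) and (iii) remain to be carried out.
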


\begin{proof}[Proof of Lemma \ref{due}]
Here and below, $\kappa_a>0$ will denote a universal constant not necessarily the same at different occurences, and which depends solely on $a$. By symmetry,
\beq
\sum_\star \E[I_\pi(A) I_{\pi'}(A)] = n!\sum_{\star, \star} \E[I_{\pi^{*}}(A) I_{\pi'}(A)]
\eeq
where $\pi^{*} \in \Sigma_n$ is arbitrary and $\sum_{\star, \star}$ standing for summation over
$$\pi'\in \Sigma_n: (\pi^{*},\pi')\in \Sigma_{n,r}  , 1 \leq \pi^{*}\wedge \pi' \leq n-2.$$
Let $k \in \{1,n-2\}$ and $\pi'\in \Sigma_n, \pi^{*}\wedge \pi' =k$. Splitting $X_{\pi^{*}}$ and $X_{\pi'}$ into common/non-common edges, we obtain
\beq\bea
\E[I_{\pi^{*}}(A) I_{\pi'}(A)]&=\PP\left( X_{\pi^{*}} \leq 1+\frac{a}{n}, X_{\pi'} \leq 1+\frac{a}{n} \right)\\
&=\int_{\R}\PP\left(x+X_{n-k}\leq 1+\frac{a}{n},x+X'_{n-k}\leq 1+\frac{a}{n}\mid X_k=x\right)\PP( X_k \in dx)\,.
\eea\eeq
In the above, $X_{n-k}$ and $X'_{n-k}$ correspond to the compound weights of the non-common edges: these are Gamma$(n-k,1)$-distributed random variables; $X_{k}$ corresponds to the weight of the common edges: this is a Gamma$(k,1)$-distributed random variable. By construction, $X_{n-k}, X'_{n-k}$ and $X_{k}$ are independent. All in all,
\beq\bea \label{blaaa}
\E[I_{\pi^{*}}(A) I_{\pi'}(A)]&= \int_{0}^{+\infty}\PP\left(x+X_{n-k}\leq 1+\frac{a}{n}\right)^2\frac{e^{-x}x^{k-1}}{(k-1)!}dx\\
&\leq \frac{\kappa_a}{{(n-k)!}^2}\int_{0}^{1+\frac{a}{n}}{\left(1+\frac{a}{n}-x\right)}^{2(n-k)}\frac{x^{k-1}}{(k-1)!}dx\,.
\eea\eeq
The last inequality by the tail-estimate of Lemma $\ref{tail}$. Integration by parts then yields
\beq\bea
\int_{0}^{1+\frac{a}{n}}{\left(1+\frac{a}{n}-x\right)}^{2(n-k)}x^{k-1}dx\leq \kappa_a \frac{(k-1)!(2(n-k))!}{(2n-k)!}\,.
\eea\eeq
and therefore
\beq\bea \label{est1}
E[I_{\pi^{*}}(A) I_{\pi'}(A)]&\leq \kappa_a \frac{(2(n-k))!}{(2n-k)!{(n-k)!}^2}.
\eea\eeq
Denoting by $f(n, k, r)$ the number of paths $\pi'$ that share precisely $k$ edges ($1 \leq k\leq n-2$) with $\pi^{*}$ and that satisfy $(\pi',\pi^{*})\in \Sigma_{n,r}$, we thus have that
\beq\bea \label{est2}
n!\sum_{\star, \star} \E[I_{\pi^{*}}(A) I_{\pi'}(A)] &=n!\sum_{k=1}^{n-2} f(n, k, r) \E[I_{\pi^{*}}(A) I_{\pi'}(A)] \\
& \stackrel{\eqref{est1}}{\leq}\kappa_a \sum_{k=1}^{n-2} \frac{f(n, k, r)}{(n-k)!}\times \frac{n!(2(n-k))!}{(n-k)!(2n-k)!} \\
& \; \leq \kappa_a \sum_{k=1}^{n-2} \frac{f(n, k, r)}{(n-k)!} \times \frac{(1-\frac{k}{n})^{n-k}}{2^k(1-\frac{k}{2n})^{2n-k}},
\eea\eeq
the last inequality by Stirling approximation. To lighten notation, remark that with $\gamma \defi k/n \in [0,1]$, the second factor in the last sum above can be written as
\beq \bea \label{g-fct}
\frac{(1-\frac{k}{n})^{n-k}}{2^k(1-\frac{k}{2n})^{2n-k}}={\left(\frac{{(4(1-\gamma))}^{(1-\gamma)}}{{(2-\gamma)}^{(2-\gamma)}}\right)}^n\defi g(\gamma)^n\,.
\eea \eeq
With this, \eqref{est2} takes the form
\beq\bea \label{estimate}
n!\sum_{\star, \star} \E[I_{\pi^{*}}(A) I_{\pi'}(A)] \leq \kappa_a  \sum_{k=1}^{n-2} \frac{f(n, k, r)}{(n-k)!}\times {g\left(\frac{k}{n}\right)}^n.
\eea\eeq
The following observation, whose elementary proof is postponed to the end of this section, will be useful.
\begin{fact} \label{really_technical} The function $g: [0,1] \to \R_+$ defined \eqref{g-fct}
is increasing on $[2/3, 1)$. Furthermore,
\beq \label{34}
\forall \gamma \leq 2/3: \; g(\gamma) \leq \left(\frac{3}{4}\right)^{\gamma}\,.
\eeq
\end{fact}
\noindent In view of Proposition \ref{path_counting}, recalling that $\mathfrak{n_{e}} = n-5e(n+3)^{2/3}$ and with
\beq \label{def_c}
C \defi \frac{7}{ \ln\left( 4/3 \right)}\,,
\eeq
 we split the sum on the r.h.s. of \eqref{estimate} into three regimes, to wit:

\beq\bea \label{three sums}
\left( \sum_{k=1}^{C\ln(n)}+\sum_{k=C \ln(n)+1}^{\mathfrak{n_{e}}}+\sum_{k=\mathfrak{n_{e}}+1}^{n-2} \right) \frac{f(n, k, r)}{(n-k)!}  \times g\left(\frac{k}{n}\right)^n \,.
\eea\eeq
Concerning the first sum :
\beq\bea
\sum_{k=1}^{C \ln(n)} \frac{f(n, k, r)}{(n-k)!}{g\left(\frac{k}{n}\right)}^n &\stackrel{\eqref{34}}{\leq} \sum_{k=1}^{C \ln(n)} \frac{f(n, k, r)}{(n-k)!}\left(\frac{3}{4}\right)^k\\
&\leq \sum_{k=1}^{r-1} \frac{f(n, k, r)}{(n-r+1)!}\left(\frac{3}{4}\right)^k +\sum_{k=r}^{C \ln(n)} \frac{f(n, k)}{(n-k)!}\left(\frac{3}{4}\right)^k\\
& \leq \sum_{k=1}^{r-1} \frac{f(n, k, r)}{(n-r+1)!}\left(\frac{3}{4}\right)^k +\kappa_ a\sum_{k=r}^{C \ln(n)}(k+1)\left(\frac{3}{4}\right)^k\,,
\eea\eeq
by Proposition \ref{path_counting}.

The function $f(n, k, r)$ counts the number of paths $\pi'$ that share precisely $k$ edges ($1 \leq k\leq n-2$) with $\pi^{*}$ and that satisfy $(\pi',\pi^{*})\in \Sigma_{n,r}$: we claim that
\beq \label{claiming}
f(n, k, r)\leq r!(n-r-1)!n.
\eeq
To see this, recall that the vertices of the hypercube stand in correspondence with the standard basis of $\R^n$: every edge is parallel to some unit vector $e_j$, where $e_j$ connects $(0, \dots, 0)$ to $(0, \dots, 0, 1, 0, \dots, 0)$
with a $1$ in position $j$. We identify a directed path $\pi$ from $\boldsymbol 0$ to $\boldsymbol 1$ by a permutation of $1 2 \dots n$, say $\pi_1 \pi_2\dots \pi_n$. $\pi_l$ is giving the direction the path $\pi$ goes in step $l$, hence after $i$ steps the path $\pi_1 \pi_2\dots \pi_n$ is at vertex $\sum_{j\leq i} e_{\pi_j}$. (By a slight abuse of notation, $\pi_1$ will refer here below to a number between, $1$ and $n$). Let  now $\pi^{*}$ be the reference path, say $\pi^{*}=12...n$. We set $u_i = l$ if the $l$-th traversed edge by $\pi'$ is the $i$-th shared edge of $\pi'$ and $\pi^{*}$, setting by convention $r_0 =0$ and $r_{k+1} = n+1$. Shorten then $\textbf{u}\defi \textbf{u}(\pi')=(u_0,...,u_{k+1})$, and  $s_i \defi u_{i+1}-u_{i}$, $i=0,...,k$. For any sequence $\textbf{u}_0=(u_0,...,u_{k+1})$ with $0=u_0<u_1<...<u_k<u_{k+1}=n+1$, let $C(\textbf{u}_0)$ denote the number of paths $\pi'$ with $\textbf{u}(\pi')=\textbf{u}_0$. Since the values $\pi'_{u_i+1},...,\pi'_{u_i+s_i-1}$ must be a permutation of $\{u_i+1,...,u_i+s_i-1\}$, one easily sees that $C(\textbf{u})\leq G(\textbf{u})$, where
\beq\label{G}
G(\textbf{u})=\prod\limits_{i=0}^{k}(s_i-1)! \,.
\eeq
We also observe that two such paths must have a common edge in the {\it middle region} $(\pi',\pi^{*})\in \Sigma_{n,r}$. Let $e$ be such an edge: as it turns out, this is quite restrictive. Indeed, it implies that there exists $u_j \in \{r+1,n-r\}$ for $ j \in \{1,..., k\}$.
In virtue of \eqref{G} and log-convexity of factorials, one has at most $r!(n-r-1)!$ paths $\pi'$ sharing the edge $e$ with the reference-path $\pi^{*}$, and at most $\binom {n}{1} = n$ ways to choose this edge: combining all this settles \eqref{claiming}.

It follows that
\beq\bea
\sum_{k=1}^{C\ln(n)} \frac{f(n, k, r)}{(n-k)!}{g\left(\frac{k}{n}\right)}^n &\leq \sum_{k=1}^{r-1} \frac{r!(n-r-1)!n}{(n-r+1)!}\left(\frac{3}{4}\right)^k +\kappa_a\sum_{k=r}^{+\infty} (k+1)\left(\frac{3}{4}\right)^k.
\eea\eeq
The first sum above clearly tends to $0$ as $n \rightarrow \infty$, whereas the second sum vanishes when $r \rightarrow \infty$: the first regime in \eqref{three sums} therefore yields no contribution in the double limit. \\

\noindent As for the second regime, by Proposition \ref{path_counting},
\beq\bea\label{reg2}
\sum_{k=C \ln(n)}^{\mathfrak{n_{e}}} \frac{f(n, k, r)}{(n-k)!}{g\left(\frac{k}{n}\right)}^n &\leq \sum_{k=C \ln(n)}^{\mathfrak{n_{e}}} \frac{f(n, k)}{(n-k)!}{g\left(\frac{k}{n}\right)}^n\\
&\leq n^6 \sum_{k=C \ln(n)}^{\mathfrak{n_{e}}}{g\left(\frac{k}{n}\right)}^n\\
&=n^6 \left(\sum_{k=C \ln(n)}^{2n/3} {g\left(\frac{k}{n}\right)}^n+\sum_{k=2n/3+1}^{\mathfrak{n_{e}}} {g\left(\frac{k}{n}\right)}^n\right).
\eea\eeq
As pointed out in Fact \ref{really_technical}, the $g$-function is increasing on $[2/3,1)$, whereas on the "complement" \eqref{34} holds: these observations, together with \eqref{reg2} imply that
\beq\bea \label{neverending}
\sum_{k=C \ln(n)}^{\mathfrak{n_{e}}} \frac{f(n, k, r)}{(n-k)!}{g\left(\frac{k}{n}\right)}^n &\leq n^6 \left(\sum_{k=C \ln(n)}^{2n/3} \left(\frac{3}{4}\right)^k+\sum_{k=2n/3+1}^{\mathfrak{n_{e}}} {g\left(\frac{\mathfrak{n_{e}}}{n} \right)}^n\right)\\
&\leq 4n^6 \left(\frac{3}{4}\right)^{C \ln(n)}+ n^7 {g\left(\frac{\mathfrak{n_{e}}}{n}\right)}^n\\
& = 4 \exp\left\{ (6+ C \ln(3/4)) \ln(n) \right\} + n^7 {g\left(\frac{\mathfrak{n_{e}}}{n}\right)}^n\,.\\
\eea \eeq
In virtue of the choice \eqref{def_c} we have that $6+ C \ln(3/4) = -1$, hence
\beq \label{neverending_2}
\eqref{neverending} = o_n(1)+ n^7 {g\left(\frac{\mathfrak{n_{e}}}{n}\right)}^n\,.
\eeq
By definition of the $g$-function \eqref{g-fct} and $\mathfrak{n_e}$, it holds:
\beq \bea \label{estg}
{g\left(\frac{\mathfrak{n_{e}}}{n}\right)}^n& = \frac{(1-\frac{\mathfrak{n_{e}}}{n})^{n-\mathfrak{n_{e}}}}{2^{\mathfrak{n_{e}}}(1-\frac{\mathfrak{n_{e}}}{2n})^{2n-\mathfrak{n_{e}}}} \\
& =  \left(\frac{5e{(n+3)}^{\frac{2}{3}}}{n}\right)^{5e{(n+3)}^{\frac{2}{3}}}2^{10e{(n+3)}^{\frac{2}{3}}} {\left(1+\frac{5e{(n+3)}^{\frac{2}{3}}}{n}\right)}^{-n-5e{(n+3)}^{2/3} }.\\
\eea \eeq
Notice that
\beq
1+\frac{5e{(n+3)}^{\frac{2}{3}}}{n}\geq 1 \text{ and } {(n+3)}^{\frac{2}{3}} \leq 2n^{\frac{2}{3}} \text{ for } n\geq 3,
\eeq
thus
\beq
\eqref{estg} \leq \left(\frac{40e}{n^{1/3}}\right)^{10e{n}^{\frac{2}{3}}}=o(n^{-7})\,,
\eeq
implying that the second regime in \eqref{three sums} yields no contribution in the limit $n \to +\infty$. \\

\noindent As for the third, and last regime: by definition of the $g$-function,
\beq\bea \label{never}
\sum_{k=\mathfrak{n_{e}}+1}^{n-2} \frac{f(n, k, r)}{(n-k)!}{g\left(\frac{k}{n}\right)}^n &\leq \sum_{k=n_e+1}^{n-2} \frac{f(n, k)}{(n-k)!}\frac{(1-\frac{k}{n})^{n-k}}{2^k(1-\frac{k}{2n})^{2n-k}}\\
& \leq \sum_{k=\mathfrak{n_{e}}+1}^{n-2} \frac{{(2n^{\frac{7}{8}})}^{n-k} (n-k+1)}{(n-k)!}\frac{(1-\frac{k}{n})^{n-k}}{2^k(1-\frac{k}{2n})^{2n-k}}\,,
\eea \eeq
the last step in virtue of Proposition \ref{path_counting}. By change of variable, $n-k \mapsto u$,  we get
\beq \bea
\eqref{never}  & = \sum_{u=2}^{5e(n+3)^{\frac{2}{3}}-1} {\left(\frac{8un^{\frac{7}{8}}}{n}\right)}^u\frac{(u+1)}{{(1+\frac{u}{n})}^{n+u}u!} \leq \sum_{u=2}^{\infty} {\left(\frac{8e}{n^{\frac{1}{8}}}\right)}^u(u+1)\,,
\eea\eeq
the last inequality by Stirling's approximation. It thus follows that the contribution of the third and last regime in \eqref{three sums} also vanishes as $n \to +\infty$. The proof of Lemma \ref{due} is concluded.
\end{proof}

We finally provide the elementary

\begin{proof}[Proof of Fact \ref{really_technical}] The sign of $g'$ is given by the sign of
\[\frac{d}{d\gamma}\left(\ln(4-4\gamma)(1-\gamma)-\ln(2-\gamma)(2-\gamma)\right)=\ln\left(\frac{2-\gamma}{4-4\gamma}\right).\]
It follows that $g'(\gamma)\leq 0$ $\forall \gamma\leq 2/3$ and $g'(\gamma)\geq 0$ $\forall \gamma\geq 2/3$. Furthermore, since
\[
1-\gamma \leq {\left(1-\frac{\gamma}{2} \right)}^2\,,
\]
we have
\beq\bea \label{estimate2}
g(\gamma) = \frac{{(4(1-\gamma))}^{(1-\gamma)}}{{(2-\gamma)}^{(2-\gamma)}}
\leq {(2-\gamma)}^{-\gamma}
\leq \left(\frac{3}{4}\right)^\gamma\,,
\eea\eeq
$\forall \gamma\leq 2/3$, settling \eqref{34}.
\end{proof}

\begin{proof}[Proof of Lemma \ref{tre}]
Again by symmetry,
\beq\bea \label{estimate3}
&\sum_\star \E[\E[I_\pi(A)|\mathcal{F}_{r,n}] \E[I_{\pi'}(A)|\mathcal{F}_{r,n}]] \\
& \qquad =n!\sum_{\star, \star} \E[\E[I_{\pi^{*}(A)}|\mathcal{F}_{r,n}] \E[I_{\pi'}(A)|\mathcal{F}_{r,n}]]\\
& \qquad =n!\sum_{\star, \star} \E\left[\PP\left(X_{\pi^{*}}\leq 1+\frac{a}{n}|\mathcal{F}_{r,n}\right) \PP\left(X_{\pi'}\leq 1+\frac{a}{n}|\mathcal{F}_{r,n}\right)\right]\,,
\eea\eeq
where $\pi^{*} \in \Sigma_n$ and $\sum_{\star, \star}$ stands for summation over
$$\pi'\in \Sigma_n, (\pi^{*},\pi')\in \Sigma_{n,r}  : 1 \leq \pi^{*}\wedge \pi' \leq n-2.$$
We split this sum into two parts: the first contribution will stem from paths $\pi'$ which share less than $2r$ edges with $\pi^{*}$, in which case $\pi'$ and $\pi^{*}$ are almost independent when n tends to $+\infty$; the second contribution will come from the (fewer) paths which are more correlated with $\pi^{*}$. Precisely, we write:
\beq\bea \label{oooh}
\eqref{estimate3}=& n!\sum_{\star, \star, 1} \E\left[\PP\left(X_{\pi^{*}}\leq 1+\frac{a}{n}|\mathcal{F}_{r,n}\right) \PP\left(X_{\pi'}\leq 1+\frac{a}{n}|\mathcal{F}_{r,n}\right)\right]\\
&\qquad +n!\sum_{\star, \star, 2} \E\left[\PP\left(X_{\pi^{*}}\leq 1+\frac{a}{n}|\mathcal{F}_{r,n}\right) \PP\left(X_{\pi'}\leq 1+\frac{a}{n}|\mathcal{F}_{r,n}\right)\right]
\eea\eeq
while $\sum_{\star, \star, 1}$ denotes summation over
$$\pi'\in \Sigma_n, (\pi^{*},\pi')\in \Sigma_{n,r}  : 1 \leq \pi^{*}\wedge \pi' \leq 2r\,,$$
whereas   $\sum_{\star, \star, 2}$ stands for summation over
$$\pi'\in \Sigma_n, (\pi^{*},\pi')\in \Sigma_{n,r}  : 2r+1 \leq \pi^{*}\wedge \pi' \leq n-2.$$
We now proceed to estimate these two sums: in the first case we will exploit the fact that the involved paths are almost independent. To see how this goes, let
\beq \bea
C_{r,n,\pi'} &\defi
\Big\{e=(u,v)\in E_n, \min \{d(u,\boldsymbol{0}),d(v,\boldsymbol{0})\}\in \left[0,r\right) \cup \left[n-r,n\right),\\
& \hspace{6cm} \text{e is a common edge of $\pi'$ and $\pi^{*}$}\Big\},
\eea \eeq
and denote by $\#C \defi |C_{r,n,\pi'}|$ the cardinality of this set. We now make the following observations:
\begin{itemize}
\item $\#C = 0$ (i.e. $C_{r,n,\pi'}=\emptyset$) implies that $\pi'$ and $\pi^{*}$ are, conditionally upon $\F_{r,n}$, independent.
\item If $\#C >0$, by positivity of exponentials,
\beq\bea
\PP\left(X_{\pi'}\leq 1+\frac{a}{n}|\mathcal{F}_{r,n}\right)&\leq \PP\left(X_{\pi'}-\sum_{e\in C_{r,n,\pi'} } \xi_{e}\leq 1+\frac{a}{n}\Bigg| \mathcal{F}_{r,n}\right)\\
&  = \PP\left(X_{n-\#C}\leq 1+\frac{a}{n}\Big|\mathcal{F}_{r,n}\right)\,,
 \eea\eeq
where $X_{n-\#C}$ is a Gamma$(n-\#C,1)$-distributed random variable which is, conditionally upon $\F_{r,n}$, independent of $X_{\pi^{*}}$.
\end{itemize}
Altogether,
\beq\bea
&n!\sum_{\star, \star, 1} \E\left[\PP\left(X_{\pi^{*}}\leq 1+\frac{a}{n}|\mathcal{F}_{r,n}\right) \PP\left(X_{\pi'}\leq 1+\frac{a}{n}|\mathcal{F}_{r,n}\right)\right]\\
& \hspace{4cm} \leq n!\PP\left(X_{\pi^{*}}\leq 1+\frac{a}{n}\right)\sum_{\star, \star, 1} \PP\left(X_{n-\#C}\leq 1+\frac{a}{n}\right)\,.
\eea\eeq
Convergence of the intensity functions \eqref{intens},  implies that the first term $n!\PP\left(X_{\pi_1}\leq 1+\frac{a}{n}\right)$ converges; in particular, it remains bounded as $n\to \infty$. It therefore suffices to prove that $\sum_{\star, \star, 1} \PP\left(X_{n-\#C}\leq 1+\frac{a}{n}\right)$ tends to 0 in the double limit. To see this, denote by $f(n, k, r)$ the number of paths $\pi'$ that share precisely $k$ edges ($1 \leq k\leq n-2$) with $\pi^{*}$ and with $(\pi',\pi^{*})\in \Sigma_{n,r}$. We then have:
\beq\bea \label{suffices}
\sum_{\star, \star, 1} \PP\left(X_{n-\#C}\leq 1+\frac{a}{n}\right)&=\sum_{k=1}^{2r}f(n, k, r)\PP\left(X_{n-\#C}\leq 1+\frac{a}{n}\right) \\
& \leq \sum_{k=1}^{2r}f(n, k)\PP\left(X_{n-\#C}\leq 1+\frac{a}{n}\right)\,,
\eea\eeq
where $f(n,k)$ is the the number of paths $\pi'$ that share precisely $k\geq 1$ edges with $\pi^{*}$. By the tail-estimates from Lemma  \ref{tail},
\beq \bea \label{proba}
\PP\left(X_{n-\#C}\leq 1+\frac{a}{n}\right) & \leq \frac{\kappa_a}{(n-\#C)!}  \leq \frac{\kappa_a}{(n-k+1)!}\,.
\eea \eeq
The second inequality holds since two paths in $\Sigma_{n,r}$ must share an
edge in the {\it complement} of ${C_{r,n,\pi'}}$. Using \eqref{proba} and Proposition \ref{path_counting} we obtain
\beq\bea
\eqref{suffices} \leq \kappa_a \sum_{k=1}^{2r}\frac{(n-k)!(k+1)}{(n-k+1)!}\,,
\eea\eeq
which vanishes as $n \rightarrow \infty$: the first sum in \eqref{oooh} therefore yields a vanishing contribution.  As for the second sum, by Cauchy-Schwarz,
\beq \label{basta}
n!\sum_{\star, \star, 2}\E[\E[I_{\pi^{*}(A)}|\mathcal{F}_{r,n}] \E[I_{\pi'}(A)|\mathcal{F}_{r,n}]] \leq n!\sum_{\star, \star, 2}\E\left[\PP\left(X_{\pi'}\leq 1+\frac{a}{n}|\mathcal{F}_{r,n}\right)^2 \right]\,.
\eeq
By the tail-estimates from Lemma \ref{tail}, for the expectation on the r.h.s. above it holds
\beq \bea \label{basta2}
& \E\left[\PP\left(X_{\pi'}\leq 1+\frac{a}{n}|\mathcal{F}_{r,n}\right)^2 \right] \\
& \qquad =  \int_{0}^{1+\frac{a}{n}} {\left(1+K(1+\frac{a}{n}-x,n-2r) \right)}^2\frac{e^{-2(1+\frac{a}{n})+x}{(1+\frac{a}{n}-x)}^{2n-4r}x^{2r-1}}{{(n-2r)!}^2(2r-1)!}dx \\
& \qquad \leq \frac{\kappa_a}{{(n-2r)!}^2(2r-1)!}\int_{0}^{1+\frac{a}{n}}{\left(1+\frac{a}{n}-x \right)}^{2n-4r}x^{2r-1}dx.\\
\eea \eeq
Integration by parts then yields
\beq \bea \label{basta5}
& \int_{0}^{1+\frac{a}{n}}{\left(1+\frac{a}{n}-x\right)}^{2n-4r}x^{2r-1}dx \qquad \leq \kappa_a \frac{(2n-4r)!(2r-1)!}{(2n-2r)!}\,,
\eea \eeq
Using \eqref{basta2} and \eqref{basta5} we get
\beq \bea \label{basta3}
\eqref{basta} &\leq \kappa_a \sum_{k=2r+1}^{n-2} \frac{f(n, k, r)}{(n-2r)!}  \frac{n!(2n-4r)!}{(n-2r)!(2n-2r)!}\,.
\eea\eeq
It clearly holds that
\beq
\frac{n!(2n-4r)!}{{(n-2r)!}(2n-2r)!}\leq 1,
\eeq
hence
\beq\bea \label{basta4}
\eqref{basta3} & \leq \sum_{k=2r+1}^{n-2}  \frac{f(n, k, r)}{{(n-2r)!}} \\
& =  \left( \sum_{k=2r+1}^{2r+7}+\sum_{k=2r+8}^{\mathfrak{n_{e}}}+\sum_{k=\mathfrak{n_{e}}+1}^{n-2} \right)
\frac{f(n,k,r)
}{{(n-2r)!}} \\
& =: (A)+ (B)+ (C),
\eea\eeq
say. By Proposition \ref{path_counting}, and worst-case estimates, the following upperbounds hold:
\beq \bea
(A)  & \leq \sum_{k=2r+1}^{2r+7} \frac{(k+1)(n-k)!}{{(n-2r)!}}  \leq \kappa_a \frac{7(2r+8)(n-2r-1)!}{(n-2r)!} \\
(B) &  \leq \sum_{k=2r+8}^{\mathfrak{n_{e}}} \frac{ n^6 (n-k)! }{{(n-2r)!}} \leq n^6\sum_{k=2r+8}^{\mathfrak{n_{e}}}\frac{(n-k)!}{{(n-2r)!}} \leq n^7\frac{(n-2r-8)!}{{(n-2r)!}} \\
(C) & \leq \sum_{k=\mathfrak{n_{e}}+1}^{n-2} \frac{{(2n^{7/8 })}^{n-k} (n-k+1)}{{(n-2r)!}} \leq \frac{n^2{(2n^{7/8})}^{5e(n+3)^{2/3}}}{(n-2r)!}\,.
\eea \eeq
All three terms are clearly vanishing in the limit $n\to \infty$. This implies that the second sum in \eqref{oooh} yields no contribution, and the proof  of Lemma \ref{tre} is thus concluded.
\end{proof}

\section*{Appendix: the conditional Chein-Stein method}
All random variables in the course of the proof are defined on the same probability space $(\Omega, \mathscr{F}, \PP)$. Let $\F\subset \mathscr{F}$ be a sigma algebra, $I$ is a finite (deterministic) set, and $(X_i)_{i\in I}$ a family of Bernoulli random variables. We set
\[
W\defi \sum_{i\in I} X_i, \qquad \lambda \defi \sum_{i\in I}\E(X_i|\F )\,.
\]
Since the claim is trivial for $\lambda = 0$ we assume $\lambda>0$ from here onwards. Additionally we denote by $\widehat{W}$ a random variable which is, conditionally upon $\F$, Poi$(\la)$-distributed, i.e.
\beq
\PP( \widehat{W} = k | \F )(\w) = \frac{\lambda(\w)^k}{k!} e^{-\lambda(\w)}.
\eeq
(To lighten notation, we will omit henceforth the $\w$-dependence). Assume to be given a bounded, $\F$-measurable (possibly random) real-valued function $f$ which satisfies $$\E(f(\widehat{W})|\F)=0,$$ and define $g_f: \N \to \R$ by
\beq \label{eq1}
 g_f(0) \defi 0, \quad g_f(n) \defi \frac{(n-1)!}{{\lambda}^n}\sum\limits_{k=0}^{n-1}\frac{f(k)\lambda^k}{k!} \quad n>0\,.
\eeq
We claim that $g_f$ is $\F$-measurable, bounded, and satisfies the following identities:
\beq\bea \label{g_f_id1}
 f(n)=\lambda g_f(n+1)- ng_f(n), \qquad  n\geq0\,,
\eea\eeq
and
\beq\bea \label{g_f_id2}
g_f(n)=-\frac{(n-1)!}{{\lambda}^n}\sum\limits_{k=n}^{\infty}\frac{f(k)\lambda^k}{k!} \qquad n > 0.
\eea\eeq
Measurability and first identity follow steadily from the definition. The second identity follows from the fact that
$\E(f(\widehat{W})|\F)=0$, whereas boundedness follows from the integral representation of the Taylor rest-term of the exponential function:
\beq \label{eq2}
\mid g_f(n)\mid \leq \frac{(n-1)! \max_{k \in \N}\mid f(k)\mid}{\lambda^n}\int\limits_{0}^{\lambda} \frac{t^{n-1}}{(n-1)!}e^t dt \leq \frac{\max_{k \in \N}\mid f(k)\mid e^\lambda}{n}.
\eeq
Let now $A \subset \N_0$, and consider the function
\beq\label{f}
f_{A,\lambda}(n) \defi \1_{n \in A}-\PP(\widehat{W}\in A|\F), \quad n \in \N.
\eeq
This is clearly a bounded, $\F$-measurable function which satisfies $\E(f_{A,\lambda}(\widehat{W})|\F)=0$. Therefore, by the above and in particular \eqref{g_f_id1}, there exists a bounded $\F$-measurable function, denoted by $g_{A,\lambda}$, which satisfies
\beq
 \1_{n \in A}-\PP(\widehat{W}\in A|\F)=\lambda g_{A,\lambda}(n+1)-n g_{A,\lambda}(n) ,
\eeq
almost surely for any $n \in \N$. It follows that
\beq
 \1_{W \in A}-\PP(\widehat{W}\in A|\F)=\lambda g_{A,\lambda}(W+1)-W g_{A,\lambda}(W).
\eeq
Taking conditional expectations thus yields
\beq\bea\label{eq3}
\PP({W \in A}|\F)-\PP(\widehat{W}\in A|\F) &= \lambda \E(g_{A,\lambda}(W+1)|\F)-\E(W g_{A,\lambda}(W)|\F)\\
&= \sum\limits_{i \in I} \E(X_i|\F)\E(g_{A,\lambda}(W+1)|\F)-\E(X_i g_{A,\lambda}(W)|\F).
\eea\eeq
Consider now the random subset
\[
N_i \defi \{j\in I\setminus{\{i\}}:\; X_j \; \text{and}\; X_i\; \text{ are not conditionally independent given}\; \F \},
\]
and denote by $S^{(i)}$ a random variable which is distributed like $\sum\limits_{j \in N_i}{X_j}$ conditionally upon $\F$ and $\{X_i=1\}$, i.e.
\beq
\PP(S^{(i)} = k |\F) = \PP\left(\sum\limits_{j \in N_i}{X_j} = k, X_i = 1 \Big|\F \right) \Big/  \PP( X_i = 1\big|\F) \,.
\eeq
if $\PP( X_i = 1|\F)>0$, and arbitrarily defined otherwise.

\noindent We remark that $X_i$ and $(X_j)_{j\in ( N_i \cup \{i\})^c}$ are conditionally on $\F$ independent. Therefore
\beq\bea \label{forheavensake}
\E(X_i g_{A,\lambda}(W)|\F)=\PP(X_i=1|\F)\E\left[ g_{A,\lambda}\left(1+S^{(i)}+\sum\limits_{j \in I\setminus (N_i \cup \{i\})}{X_j} \right)\Bigg| \F\right],
\eea\eeq
since $X_i$ and $X_j$ are conditionally independent given $\F$. Plugging this into the r.h.s. of \eqref{eq3} yields
\beq\bea
& \PP({W \in A}|\F)-\PP(\widehat{W}\in A|\F)  \\
& \hspace{2cm} = \sum\limits_{i \in I} \E(X_i|\F)\E\left[g_{A,\lambda}(1+W) - g_{A,\lambda}(1+S^{(i)}+\sum\limits_{j \in I\setminus (N_i \cup \{i\})}{X_j})\Bigg| \F\right].
\eea\eeq
Set now
\beq \label{m}
M\defi \sup\{|g_{A,\lambda}(n+1)-g_{A,\lambda}(n)|: n\in \N_0\}\,.
\eeq
(Notice that $M$ is $\F$-measurable). By the triangle inequality, and worstcase-scenario,
\beq\bea
& \mid \PP({W \in A}|\F)-\PP(\widehat{W}\in A|\F) \mid \leq M \sum\limits_{i \in I}  \E(X_i|\F)  \E(X_i + S^{(i)}+ \sum\limits_{j\in N_i} X_j   | \F) \\
& \hspace{2cm} = M \sum\limits_{i \in I} \left(E(X_i|\F)^2+\sum\limits_{j \in N_i} \left(\E(X_jX_i|\F)+\E(X_j|\F)E(X_i|\F) \right)\right)\,.
\eea\eeq
It remains to prove that $M\leq1$. To this end we observe that additivity of $g_{.,\lambda}$ is inherited from $f_{.,\lambda}$, hence
\beq\label{add}
g_{A,\lambda} = \sum\limits_{j\in A} g_{\{j\},\lambda}\,.
\eeq
Furthermore,
\beq \label{zero}
\sum\limits_{j=0}^{\infty}g_{\{j\},\lambda}(n+1)-g_{\{j\},\lambda}(n)=0,
\eeq
since
\beq\bea
\sum\limits_{j=0}^{\infty}g_{\{j\},\lambda}(n) &\stackrel{\eqref{add}}{=} g_{\N_0,\lambda}(n) = 0 \quad \forall n\in \N,
\eea\eeq
because $f_{\N_0,\lambda}$ is the zero function. Therefore, for any $A\subset \N_0$,
\beq\label{technical_stuff1}
|g_{A,\lambda}(n+1)-g_{A,\lambda}(n)| \leq \sum\limits_{j=0}^\infty (g_{\{j\},\lambda}(n+1)-g_{\{j\},\lambda}(n))^{+}.
\eeq
By \eqref{eq1}, the definition of $f$ and elementary computations we have, for $0<n\leq j$, that
\beq\label{exp}
g_{\{j\},\lambda}(n)=-\PP(\widehat{W}=j|\F) \sum\limits_{l=0}^{n-1} \frac{(n-1)!}{\lambda^{l+1}(n-1-l)!}.
\eeq
This implies in particular that $g_{\{j\},\lambda}(n)$ is decreasing in $n$ on $[0,j]$, hence all summands $j\geq n+1$ in \eqref{technical_stuff1}  vanish. On the other hand, by \eqref{g_f_id2}, again the definition of $f$ and elementary computations we have for $n>j$
\beq\label{exp2}
g_{\{j\},\lambda}(n)=\PP(\widehat{W}=j|\F) \sum\limits_{l=0}^{\infty} \frac{\lambda^{l}(n-1)!}{(n+l)!}.
\eeq
Since this is also decreasing in $n$, it follows that $j=n$ is the only non-zero summand in \eqref{technical_stuff1}. All in all,
\beq  \label{less_zero}
M= \sup_{n\in\N}\mid g_{A,\lambda}(n+1)-g_{A,\lambda}(n)\mid  \leq \sup_{n\in\N} \mid g_{\{n\},\lambda}(n+1)-g_{\{n\},\lambda}(n)\mid.
\eeq
Now, for $n>0$, by \eqref{exp} and \eqref{exp2},
\beq\bea \label{less1}
& \mid g_{\{n\},\lambda}(n+1)-g_{\{n\},\lambda}(n)\mid = \\
& \hspace{2cm} =\frac{\lambda^n e^{-\lambda}}{n!}\left(  \sum\limits_{l=0}^{\infty} \frac{\lambda^{l}(n-1)!}{(n+l)!}+\sum\limits_{l=0}^{n-1} \frac{(n-1)!}{\lambda^{l+1}(n-1-l)!}\right)\\
& \hspace{2cm} =\frac{e^{-\lambda}}{n}\left(\sum\limits_{l=n}^{\infty}\frac{{\lambda}^l}{l!}+\sum\limits_{l=0}^{n-1}\frac{{\lambda}^{l} }{l!}\right) = \frac{1}{n}\leq1.
\eea\eeq
On the other hand, for $n=0$,
\beq \label{less2}
\mid g_{\{0\},\lambda}(1)-g_{\{0\},\lambda}(0)\mid =  \frac{1}{\lambda}(1-e^{-\lambda})\leq 1\,,
\eeq
by Taylor estimate. Using \eqref{less1} and \eqref{less2} in \eqref{less_zero} shows that $M\leq1$ as claimed, and concludes the proof of the conditional Chen-Stein method. \\

\hfill $\square$

\end{document}